 \newtheorem{theorem}{Theorem}
 \newtheorem{lemma}[theorem]{Lemma}
 \newtheorem{proposition}[theorem]{Proposition}
 \theoremstyle{definition}
 \newtheorem{definition}[theorem]{Definition}
 \theoremstyle{remark}
 \newcommand{\mc}{\mathcal}
 \newcommand{\C}{\mathbb{C}}
 \newcommand{\R}{\mathbb{R}}
 \newcommand{\N}{\mathbb{N}}
 \newcommand{\Z}{\mathbb{Z}}
 \newcommand{\csch}{\mathrm{csch}}
 \def\today{\ifcase\month\or
  January\or February\or March\or April\or May\or June\or
  July\or August\or September\or October\or November\or December\fi
  \space\number\day, \number\year}
\title{Extremal functions with vanishing condition}
\author{Friedrich Littmann and Mark Spanier}
\date{\today}
\subjclass[2000]{Primary 41A30, 41A52. Secondary 41A05, 41A44, 42A82}
\keywords{Exponential type, bandlimited functions,  best one-sided approximation, de Branges space, Hermite-Biehler entire function.}
\address{Department of mathematics, North Dakota State University, Fargo, ND 58105.}
\email{Friedrich.Littmann@ndsu.edu, Mark.Spanier@ndsu.edu}
\begin{document}

\begin{abstract} For a Hermite-Biehler function $E$ of mean type $\tau$ we determine the optimal (with respect to the de Branges measure of $E$) majorant $M_E^+$ and minorant $M_E^-$ of exponential type $\tau$ for the truncation of $x\mapsto (x^2+a^2)^{-1}$.  We prove that
\[
\int_\R\left(M_E^+(x) - M_E^-(x)\right) |E(x)|^{-2}dx = \frac{1}{a^2 K(0,0)}
\]
where $K$ is the reproducing kernel for the de Branges space $\mc{H}(E)$. As an application we determine the optimal majorant and minorant for the Heaviside function that vanish at a fixed point $\alpha = ia$ on the imaginary axis. We show that the difference of majorant and minorant has integral value $(\pi a - \tanh(\pi a))^{-1} \pi a$.
\end{abstract}

\maketitle

\numberwithin{equation}{section}

\section{Results}

Let $\delta\ge 0$.  We say that an entire function $F$ is of exponential type $\delta\ge 0$ if for every $\varepsilon>0$ there exists $C_\varepsilon>0$ such that $|F(z)| \le C_\varepsilon e^{(\delta+\varepsilon)|z|}$ for all $z\in\C$, and we write  $\mc{A}(\delta)$ for the class of entire functions of exponential type $\delta$. 

Let $\mu$ be a Borel measure on $\R$.  We write $\mc{A}_p(\delta,\mu)$ for the space of functions $F\in \mc{A}(\delta)$  such that
\[
\int_\R |F(x)|^p d\mu(x)<\infty, 
\]
and we write $\mc{A}_p(\delta)$ if $\mu$ is Lebesgue measure.

Let $f:\R\to \R$ be measurable and of polynomial growth, and let $\alpha\in\C\backslash\R$. Consider the class $\mc{F}_+ = \mc{F}_+(f,\alpha)\subseteq \mc{A}(\delta)$ of entire functions $F$ satisfying
\begin{itemize}
\item[(i)] $F(x) \ge f(x)$ for all real $x$,
\item[(ii)] $F(\alpha) =0$,
\item[(iii)] $||F-f||_{L^1(\R)}<\infty$.
\end{itemize}

Assume that $\mc{F}_+\neq \emptyset$. We seek to find $F^+\in \mc{F}_+$ such that the inequality
\begin{align}\label{ex-i-ii}
\int_\R (F(x) - F^+(x)) dx \ge 0
\end{align}
holds for all $F\in \mc{F}_+$.   If $F^+$ exists, we call it an \textsl{extremal majorant of $f$ (of type $\delta$) with vanishing at $\alpha$}. The class $\mc{F}_-(f,\alpha)$ is defined analogously by reversing the inequalities in (i) and \eqref{ex-i-ii}; the corresponding optimal functions are called  \textsl{extremal minorants with vanishing at $\alpha$}. We note that condition (ii) implies $F(\overline{\alpha}) =0$.

This extremal problem was first considered in \cite{KV} for the function
\[
f(x) = \chi_{[\beta,\gamma]}(x)
\]
with $\beta<\gamma$. M.\ Kelly found non-optimal functions in $\mc{F}_+(\chi_{[\beta,\gamma]},\alpha)$ and obtained bounds for $||F^\pm-\chi_{[\beta,\gamma]}||_1$ as a function of $\delta$. This choice of $f$  is quite natural, since the extremal functions satisfying (i) and \eqref{ex-i-ii} have found frequent applications in analytic number theory \cite{Gal, S, V} and signal processing \cite{DL}.

The condition (ii) may allow  applications to certain explicit formulas for $L$-functions. We briefly describe the setup. An $L$-function $s\mapsto L(\chi,s)$ is given by
\[
L(\chi,s)  = \sum_{n=1}^\infty \frac{\chi(n)}{n^s}
\]
where $\chi$ is a Dirichlet character. The generalized Riemann hypothesis states that the zeros of $L$ in $0<\Re s<1$ all lie on the line $\Re s=1/2$. An explicit formula is an identity that relates a series involving values $L(\chi,\rho) h(\rho)$ where $L(\chi,\rho) =0$ and $h$ is a smooth function to values of the Fourier transform of $h$. Under assumption of GRH and writing $f(t)  = L(\chi,\frac12 + it)$ the series involves only real values $t$, and $h$ is frequently chosen to be a solution to the above extremal problem. Without assumption of GRH the series will involve non-real values of $t$, and an extremal function satisfying (ii) may be used to eliminate the contribution of such a zero.

The results in this paper are an attempt to understand how the vanishing condition affects the construction of extremal functions. For this purpose we consider  $\alpha=ia$ with $a>0$ and approximate the cutoff function $x_+^0$ defined by
\[
x_+^0 = \begin{cases}
1 \text{ if }x\ge 0,\\
0\text{ else.}
\end{cases}
 \]
 
 We find the extremal majorant and minorant  $F^\pm \in \mc{F}_\pm(x_+^0, ia)$ of type $2\pi$. The results, given in Theorem \ref{vanishing} below, are obtained by changing measure and function in such a way that condition (ii) may be dropped.  This is possible since $F^+$ and $F^-$ are necessarily of the form 
\[
F^\pm(z) = G^\pm(z) (z^2+a^2)
 \] 
for some entire functions $G^\pm\in\mc{A}(\delta)$. Hence, defining $t_a$ by 
\[
t_a(x) = \begin{cases}
(x^2+a^2)^{-1}\text{ if }x\ge 0,\\
0\text{ else},
\end{cases}
\]
we seek to find $G^+,G^-\in \mc{A}(\delta)$ such that $G^-(x) \le t_a(x) \le G^+(x)$ for all real $x$, and the integral
\[
\int_{-\infty}^\infty \big(G^+(x) - G^-(x)\big) (x^2+a^2) dx
\]
is minimal.  This type of problem has been frequently investigated. An early result is the solution by Beurling \cite{Be} and independently by Selberg \cite[Chapter 20]{S} for the signum function, cf.\ the account in Vaaler \cite{V}.  Further references to recent developments can be found in \cite{CLV}. For the related subject of one-sided polynomial approximation to the signum function see Bustamante {\it et.al.}\ \cite{BQM}, while the best one-sided approximations to characteristic functions by trigonometric polynomials were found recently by Babenko {\it et.al.}\ \cite{BKY}.  For the best approximation without constraints we refer to Ganzburg \cite{G}. For weighted polynomial approximation  see Lubinsky \cite{Lub-Poisson, Lub}. For general facts regarding best approximation we refer to the books of Korneichuk \cite{Ko} and Pinkus \cite{P}.

 In Theorem \ref{T1} we solve the extremal problem for $t_a$ with respect to a large class of measures. In order to state this result we require some terminology from the theory of de Branges Hilbert spaces of entire functions. We refer to \cite{B} and \cite{HV} for proofs of the following facts.

For any entire function $E$ we use the notation $E^*(z) = \overline{E(\overline{z})}$. An entire function $E$ satisfying the inequality
\begin{align}\label{HB}
|E(z)|>|E^*(z)|
\end{align}
for all $z$ with $\Im z >0$ will be called a \textsl{Hermite-Biehler} (HB) function. An analytic function $F$ defined in the upper half plane $\C^+$ is said to have bounded type, if $F$ is the quotient of two bounded functions. The number $v(F)$ defined by
\[
v(F) = \limsup_{y\to \infty} y^{-1} \log|F(iy)|
\]
is called the mean type of $F$. If $F$ has bounded type in $\C^+$, then its mean type is finite.

We denote by $\mc{H}(E)$  the vector space of entire functions $F$ such that
\[
\int_{-\infty}^\infty |F(x)/E(x)|^2 dx <\infty
\]
and the functions $F/E$ and $F^*/E$ have bounded type and nonpositive mean type in $\C^+$. It is a Hilbert space with scalar product
\[
\langle F, G \rangle_E = \int_{-\infty}^\infty F(x) G^*(x) |E(x)|^{-2} dx.
\]

 We define $A = (1/2)(E+E^*)$ and $B = (i/2)(E - E^*)$. A fundamental result of de Branges from the 1960's is the recognition that this space is a reproducing kernel Hilbert space; we briefly sketch the argument. It follows from \eqref{HB} that $E$ has no zeros in the open upper half plane, and it follows from the definition of $\mc{H}(E)$ that $F/E$ and $F^*/E$ have no zeros in an open set containing the closed upper half plane. The condition that $F/E$ and $F^*/E$ have non-positive mean type implies that the Cauchy integral formula for the upper half plane holds for $F/E$ and $F^*/E$ (e.g., \cite[Theorem 12]{B}, note also that Cauchy formulas for $F/E^*$ and $F^*/E^*$ in the lower half plane hold), and it follows with an elementary calculation that $\mc{H}(E)$ has the reproducing kernel $K$ given by
\begin{align}\label{K-from-AB}
K(w,z) = \frac{B(z) A(\bar{w}) - A(z) B(\bar{w})}{\pi(z-\bar{w})}
\end{align}
for $z\neq \bar{w}$.

The prototypical example of a de Branges space is the classical Paley-Wiener space of square integrable functions of finite exponential type at most $\tau$. In this case $E(z) = e^{-i\tau z}$, and the condition that $F/E$ and $F^*/E$ have non-positive mean type implies that $F$ and $F^*$ grow no faster than $E$, i.e., this condition is essentially equivalent to the statement that $F$ has exponential type $\tau$. The reproducing kernel for this space is the familiar sinc function
\[
(z,w)\mapsto \frac{\sin\tau (z-\bar{w})}{\pi(z-\bar{w})}.
\]

The classical proof that this is a reproducing kernel uses Fourier analysis, but this can also be shown using a contour integration argument. It is the latter proof that generalizes to the setting of de Branges spaces.

We require the following conditions for $E$:
\begin{enumerate}
\item[(I)] $E$ is of bounded type in $\C^+$ with mean type $\tau$,
\item[(II)] $E$ has no  real zeros, 
\item[(III)] $E^*(z) = E(-z)$ for all $z$,
\item[(IV)] $B \notin \mc{H}(E)$.
\end{enumerate}

These are technical assumptions; we return to these properties in Section \ref{DBS}, where we obtain the following result.

\begin{theorem}\label{T1} Let $E$ be a HB-function satisfying conditions {\rm (I) -- (IV)}. If  $F^+,F^-\in\mc{A}(2 \tau)$ with  
\begin{align}\label{pmii}
F^-(x) \le t_a(x)\le F^+(x)
\end{align}
 for all real $x$, then
\begin{align}\label{opt-value-intro}
\int_\R (F^+(x) - F^-(x))  \frac{dx}{|E(x)|^2} \ge \frac{1}{a^2 K(0,0)},
\end{align}
and there exist functions $T^{\pm}_a\in\mc{A}(2 \tau)$ satisfying \eqref{pmii} such that there is equality in \eqref{opt-value-intro} for $F^+ = T^+_a$ and $F^- = T_a^-$.
\end{theorem}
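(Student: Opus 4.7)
The plan is to establish \eqref{opt-value-intro} by a reproducing-kernel argument and then construct explicit extremizers $T_a^\pm$ attaining it. Suppose $F^\pm \in \mc{A}(2\tau)$ satisfy \eqref{pmii} with $\int (F^+ - F^-)\,d\mu_E < \infty$ (the claim is vacuous otherwise). The difference $H := F^+ - F^-$ is entire of exponential type at most $2\tau$, nonnegative on $\R$, and in $L^1(\mu_E)$, so the factorization property guaranteed by (I) and recalled just after (I)--(IV) produces $U \in \mc{H}(E)$ with $H = UU^*$; in particular $H(x) = |U(x)|^2$ for real $x$. Evaluating at $x=0$: the majorant condition forces $F^+(0) \ge t_a(0) = 1/a^2$, while continuity of $F^-$ combined with $F^-(x) \le 0$ for $x < 0$ yields $F^-(0) \le 0$. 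Hence $|U(0)|^2 = H(0) \ge 1/a^2$, and the reproducing kernel inequality $|U(0)|^2 \le K(0,0)\|U\|_E^2$ gives
\[
\int_\R (F^+ - F^-)\,d\mu_E \;=\; \|U\|_E^2 \;\ge\; \frac{|U(0)|^2}{K(0,0)} \;\ge\; \frac{1}{a^2 K(0,0)},
\]
which is \eqref{opt-value-intro}.

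For equality, both intermediate inequalities above must be tight. The Cauchy--Schwarz equality case of the reproducing kernel inequality forces $U$ to be a scalar multiple of $K(\cdot, 0)$, and $|U(0)|^2 = 1/a^2$ pins the scalar down up to phase; I would normalize $U_0(z) := K(z,0)/(a K(0,0))$. Condition (III) gives $B(0)=0$ and hence $K(z,0) = A(0)B(z)/(\pi z)$, so the nonzero zeros of $U_0$ are exactly the nonzero real zeros $\{\xi_n\}$ of $B$. Tightness of the first inequality also forces $F^+(0) = 1/a^2$, $F^-(0) = 0$, and the relation $T_a^+ - T_a^- = U_0 U_0^*$ on $\R$; since $U_0^2$ has double zeros at each $\xi_n$ and $t_a$ is smooth there, the sandwich $T_a^- \le t_a \le T_a^+$ forces $T_a^\pm(\xi_n) = t_a(\xi_n)$ and $(T_a^\pm)'(\xi_n) = t_a'(\xi_n)$ at every such $\xi_n$, while at the origin $T_a^+(0) = 1/a^2$ and $T_a^-(0) = 0$.

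These Hermite interpolation conditions determine candidates $T_a^\pm \in \mc{A}(2\tau)$ uniquely, and I would write them explicitly either as series built from the squared kernels $K(\cdot, \xi_n)^2$ with a correction term encoding the value at the origin, or via a Laplace/Poisson integral representation of $t_a$ on $[0,\infty)$ combined with a quadrature identity in $\mc{H}(E)$. The main obstacle is verifying that the candidate interpolants satisfy the one-sided inequalities globally on $\R$, not just at the nodes. The standard strategy is to express $T_a^+ - t_a$ and $t_a - T_a^-$ as nonnegative combinations of a kernel built from $E$, using conditions (II) and (IV) to ensure local $\mu_E$-integrability and nondegeneracy of the interpolation (in particular, that the sampling series converges and represents the correct functions). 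This sign verification, together with checking the $L^1(\mu_E)$ membership needed to run the factorization step, is the most delicate part of the argument and should occupy the bulk of Section \ref{DBS}.
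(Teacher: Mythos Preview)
Your lower-bound argument is correct and genuinely different from the paper's. The paper first constructs the extremal functions $T_a^\pm$ (via the interpolation machinery of Section~\ref{interpolation}), then proves optimality of each one separately by writing $F - T_a^- = VV^*$ and $T_a^+ - T_a^- = UU^*$ and applying the quadrature identity \eqref{qu-HE} over the zeros of $B$; this is where condition (IV) enters. You instead factor $F^+ - F^- = UU^*$ directly and apply the single-point reproducing kernel estimate $|U(0)|^2 \le K(0,0)\|U\|_E^2$, together with $F^+(0)\ge 1/a^2$ and $F^-(0)\le 0$. Your route is shorter, does not require having $T_a^\pm$ in hand, and does not use (IV) for the inequality itself. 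The paper's route, on the other hand, yields the stronger statement that $T_a^+$ and $T_a^-$ are individually extremal among majorants and minorants, not just that their difference is minimal.

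For the existence of extremizers your proposal is only a sketch, and the concrete mechanism is missing. You correctly deduce that equality forces $U$ proportional to $K(\cdot,0)=A(0)B(z)/(\pi z)$ and hence $T_a^\pm$ must interpolate $t_a$ at the nonzero zeros of $B$ with the prescribed values $T_a^+(0)=1/a^2$, $T_a^-(0)=0$. But the step you flag as ``the most delicate part'' --- global one-sidedness of the interpolants --- is exactly what Section~\ref{interpolation} is for, and nothing in your outline explains how to carry it out. The paper builds $T_a^\pm = M^\pm(B^2,a,\cdot)$ through the Laplace inverse $g$ of $1/B^2$ and the auxiliary function $h_a$ of \eqref{h-def}, and the sign analysis (Theorems~\ref{M^+thm} and~\ref{M^-thm}) rests on fine properties of $g',g'',h_a''$ coming from the Laguerre--P\'olya structure, together with the condition $B^2(ia)<0$ (automatic from (III) and the HB inequality, but essential for the majorant). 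Your two suggested constructions --- sampling series in $K(\cdot,\xi_n)^2$ or a Laplace/Poisson representation --- point in the right direction (the paper uses the second), but neither gives a reason why the resulting interpolants stay on the correct side of $t_a$ between nodes. That reason is the substantive content of Section~\ref{interpolation}, and it is not visible in your proposal.
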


We consider now the problem of finding the extremal functions with vanishing at $\alpha= ia$ with respect to Lebesgue measure. In order to apply Theorem \ref{T1} we require a Hermite-Biehler function $E_a$ satisfying (I) - (IV) such that 
\[
\int_{-\infty}^\infty \left| \frac{F(x)}{E_a(x)} \right|^2 dx = \int_{-\infty}^\infty |F(x)|^2(x^2+a^2) dx
\]
for all $F\in\mc{A}_2(\pi,(x^2+a^2) dx)$. We shall prove in Theorem \ref{isometry} that $E_a$ defined by
\begin{align}\label{def-Ea-intro}
E_a(z) = \left(\frac{2}{\sinh(2\pi a)} \right)^{\frac12} \frac{\sin\pi(z+ia)}{z+ia}.
\end{align}
has this property. Theorem \ref{T1} becomes applicable and we obtain the following result for extremal functions with vanishing at $\alpha = ia$.

\begin{theorem}\label{vanishing} Let $a>0$. If $S,T\in \mc{A}(2\pi)$ with 
\[
S(i a) = T(i a) =0
\]
and
\[
S(x) \le x_+^0 \le T(x)
\]
for all real $x$, then
\begin{align}\label{optimal-vanishing-intro}
\int_{-\infty}^\infty (S(x) - T(x)) dx \ge\frac{\pi a}{\pi a - \tanh(\pi a)},
\end{align}
and there exist $S_a^+, S_a^-\in\mc{A}(2\pi)$ such that there is equality in \eqref{optimal-vanishing-intro} for $S = S_a^-$ and $T = S_a^+$.
\end{theorem}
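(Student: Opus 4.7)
The plan is to deduce this from Theorem~\ref{T1} applied to the Hermite-Biehler function $E_a$ defined in \eqref{def-Ea-intro}. Since $S,T\in\mc{A}(2\pi)$ take real values on $\R$, they satisfy $S=S^*$ and $T=T^*$, so the hypothesis $S(ia)=T(ia)=0$ forces $S(-ia)=\overline{S(ia)}=0$ and likewise $T(-ia)=0$. Hence $(z^2+a^2)=(z-ia)(z+ia)$ divides each of them, and the quotients
\[
G(z):=\frac{S(z)}{z^2+a^2},\qquad H(z):=\frac{T(z)}{z^2+a^2}
\]
are entire, lie in $\mc{A}(2\pi)$ (removing a polynomial factor preserves exponential type), and satisfy $G(x)\le t_a(x)\le H(x)$ for all $x\in\R$ because $x^2+a^2>0$. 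Thus the pair $(G,H)$ is admissible in Theorem~\ref{T1}.

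To convert the conclusion of Theorem~\ref{T1} into an unweighted integral I will use the norm identity
\[
\int_\R |U(x)|^2|E_a(x)|^{-2}\,dx=\int_\R|U(x)|^2(x^2+a^2)\,dx\qquad(U\in\mc{H}(E_a))
\]
announced in the introduction. This first requires verifying that $E_a$ satisfies (I)--(IV): the zeros $n-ia$ ($n\in\Z$) lie off the real axis, $E_a(iz)=(2/\sinh(2\pi a))^{1/2}\sinh\pi(z+a)/(z+a)$ is real entire, the mean type equals $\pi$, and (IV) is recorded in Section~\ref{DBS}. Assuming $\int(T-S)\,dx<\infty$ (otherwise the assertion is vacuous), the nonnegative function $H-G\in\mc{A}(2\pi)$ has finite $\int(H-G)(x^2+a^2)\,dx$, and the pointwise comparability of $|E_a|^{-2}$ with $x^2+a^2$ together with property (I) factors it as $H-G=UU^*$ with $U\in\mc{H}(E_a)$. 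Applying the norm identity and Theorem~\ref{T1} then yields
\[
\int_\R(T-S)\,dx=\int_\R(H-G)(x^2+a^2)\,dx=\int_\R(H-G)|E_a|^{-2}\,dx\ge\frac{1}{a^2K_{E_a}(0,0)}.
\]

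It remains to evaluate $K_{E_a}(0,0)$. With $A=(E_a+E_a^*)/2$ and $B=(i/2)(E_a-E_a^*)$, condition (III) makes $A$ even and $B$ odd, so $A'(0)=B(0)=0$ and a limit in \eqref{K-from-AB} gives $K_{E_a}(0,0)=A(0)B'(0)/\pi$. Setting $c=(2/\sinh(2\pi a))^{1/2}$ and using the sum-to-product identities for $\sin\pi(z\pm ia)$, a direct computation produces
\[
A(0)=\frac{c\sinh\pi a}{a},\qquad B'(0)=\frac{c(\pi a\cosh\pi a-\sinh\pi a)}{a^2},
\]
and the normalization $c^2=1/(\sinh\pi a\cosh\pi a)$ collapses the product to
\[
K_{E_a}(0,0)=\frac{\pi a-\tanh\pi a}{\pi a^3},\qquad \frac{1}{a^2K_{E_a}(0,0)}=\frac{\pi a}{\pi a-\tanh\pi a},
\]
which is \eqref{optimal-vanishing-intro}. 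For the extremal functions I will take the extremals $T_a^\pm$ furnished by Theorem~\ref{T1} and set $S_a^\pm(z):=(z^2+a^2)T_a^\pm(z)$; these lie in $\mc{A}(2\pi)$, vanish at $ia$, sandwich $x_+^0$, and achieve equality by the same chain of identities. The principal obstacle is the norm identity in the second paragraph, which is precisely what singles out the specific $E_a$ of \eqref{def-Ea-intro} among candidates with comparable weight---the pointwise identity $|E_a(x)|^{-2}=x^2+a^2$ fails, so equality of the two integrals for $U\in\mc{H}(E_a)$ must rest on the detailed Paley-Wiener-type structure of $\mc{H}(E_a)$.
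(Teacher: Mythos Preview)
Your proposal is correct and follows essentially the same route as the paper: factor out $z^2+a^2$, apply Theorem~\ref{T1} to the resulting majorant/minorant of $t_a$, and use the isometry $\mc{H}(E_a)=\mc{A}_2(\pi,\mu_a)$ (the paper's Theorem~\ref{isometry}) to pass from the $|E_a|^{-2}$-weighted integral to the $(x^2+a^2)$-weighted one via the factorization $H-G=UU^*$. The only minor difference is that you evaluate $K_{E_a}(0,0)$ directly as $A(0)B'(0)/\pi$ from the parity of $A$ and $B$, whereas the paper reads it off the diagonal formula \eqref{RK-real}; both give $(\pi a-\tanh\pi a)/(\pi a^3)$. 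As you correctly note, the substantive step you defer is precisely the isometry statement, which the paper establishes separately in Section~\ref{section-vanishing-space}.
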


It is instructive to consider the corresponding integral for the extremal functions of type $2\pi \delta$. To find these values, we  temporarily set $a = b\delta$. Then the extremal functions $F^\pm \in \mc{A}(2\pi)$ with $F(ib\delta) =0$ satisfy
\[
\int_{-\infty}^\infty (F^+(x) - F^-(x)) dx = \frac{\pi b \delta}{\pi b\delta - \tanh(\pi b \delta)}.
\]

 We  note that the functions $F^\pm_\delta$ given by $ F^\pm_\delta(z) =  F^\pm(\delta z)$ are the extremal functions of type $2\pi \delta$ with $F^\pm_\delta(ib) =0$, and a change of variable gives 
 \[
 \int_{-\infty}^\infty ( F^+_\delta(x) - F^-_\delta(x)) dx =   \frac{\pi b }{\pi b\delta - \tanh(\pi b \delta)}.
 \]

 This implies that for fixed $b$ and $\delta\to \infty$ the integral is $\sim \delta^{-1}$, while for $\delta \to 0+$ the integral is $\sim 3 (\pi b)^{-2}  \delta^{-3}$. In \cite[Theorem 8]{V} the corresponding extremal problem for the signum function without the vanishing condition is treated. The integral value is shown to be equal to $\delta^{-1}$. This shows that the prescribed vanishing at $\alpha = ib$ substantially affects the integral value for small values of $\delta$, but the vanishing condition leads only to a small change if $\delta$  becomes large.

This paper is structured as follows. In Section \ref{interpolation} we give a general method to construct entire functions $M^\pm$ that interpolate $t_a$ at the zeros of a given Laguerre-P\'{o}lya entire function $F$, and we prove that 
\[
x\mapsto F(x) \left(M^\pm(x) - t_a(x)\right)
\] 
is of one sign for all real $x$. These functions serve as  candidates for the extremal functions in the approximation results of the subsequent sections. We prove Theorem \ref{T1} in Section \ref{DBS}. In Section \ref{section-vanishing-space} we investigate the Hermite-Biehler function $E_a$ from \eqref{def-Ea-intro} and prove Theorem \ref{vanishing}.

\medskip
 
\noindent {\it Acknowledgement.} The authors thank the anonymous referees for their valuable suggestions.


\section{Interpolation at zeros of LP-functions}\label{interpolation}

\smallskip

For certain discrete subsets $\mc{T}\subset \R$ we show how to construct entire functions $F\in \mc{A}(2\tau)$ which interpolate $t_a$ at the elements of $\mc{T}$ (with prescribed multiplicity) \textsl{so that $F - t_a$ has no sign changes between consecutive elements of $\mc{T}$.} This is the basis for the construction of the extremal functions in Theorems \ref{T1} and \ref{vanishing}.

The interpolation theorems of this section are based on the representation
\[
\frac{a}{z^2+a^2} = \int_0^\infty e^{-z\lambda} \sin( a \lambda) d\lambda,
\]
valid for all $z$ with $\Re z>0$. The analogues of the crucial inequalities in Theorems \ref{M^+thm} and \ref{M^-thm} below for the function $x\mapsto e^{-x \lambda}\chi_{[0,\infty)}(x)$  are known for every $\lambda>0$  (cf.\ \cite{CL}), but the fact that $\sin(a\lambda) d\lambda$ is a signed measure introduces additional difficulties.

\begin{definition}
The Laguerre-P\'{o}lya class $LP$ consists of all entire functions of the form
\begin{equation}
F(z) = Ce^{-\gamma z^2+bz}z^\kappa \prod_{k=1}^\infty \left(1-\frac{z}{a_k}\right)e^{z/a_k}
\end{equation}
where $\gamma\geq 0,$ $\kappa \in \mathbb{N}_0,$ $b,$ $C,$ $a_k$ ($k\in \N$) are real, and 
\begin{equation}
\sum_{k=1}^\infty \frac{1}{a_k^2}<\infty.
\end{equation}

We denote by $\mc{T}_F$ the set of zeros of $F$. If the set of zeros is bounded above, we include $\infty\in\mc{T}_F$, if the set is bounded below, we include $-\infty\in\mc{T}_F$. 
\end{definition}

For $c\in \mathbb{R}\setminus \mathcal{T}_F$  we define
\begin{equation}\label{g_cDef}
g_c(t)=\frac{1}{2\pi i}\int_{c-i\infty}^{c+i \infty} \frac{e^{st}}{F(s)}\,ds,
\end{equation}
if this integral converges absolutely. (This is the case if $\gamma>0$ or if $F$ has at least two zeros.) Let $\tau_1$ and $\tau_2$ be the consecutive elements from $\mc{T}_F$ that satisfy $\tau_1<c<\tau_2$. A Fourier inversion shows that
\begin{equation}\label{OneOverF}
\frac{1}{F(z)}=\int_{-\infty}^\infty e^{-zt}g_c(t)\,dt
\end{equation}
in the strip $\tau_1 < \Re{z} <\tau_2$. An application of the residue theorem shows that $g_c = g_d$ for $c,d\in(\tau_1,\tau_2)$. 

As part of a series of papers on total positivity, I.J.\ Schoenberg \cite{Sch} gave an intrinsic characterization of the functions $g$ that may occur as Laplace inverse transformations of LP functions. We require estimates that can be found in \cite{HW}.

\begin{lemma}\label{gc-sign-changes} Let $F\in LP$  have at least $n+2$ zeros counted with multiplicity, and let $\tau_1,\tau_2\in \mc{T}_F$ with $0\in(\tau_1,\tau_2)$. Then $g_0^{(k)}$ exists for $k\le n$ and has at most $k$ sign changes on the real line. 
\end{lemma}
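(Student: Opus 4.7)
The lemma has two assertions: that $g^{(k)}$ exists for $k\le n$, and that it has at most $k$ sign changes on $\R$. I would split the argument accordingly.

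\textbf{Existence.} The plan is to differentiate the defining integral \eqref{g_cDef} under the integral sign; since the choice of $c\in(\tau_1,\tau_2)$ is immaterial (as noted after \eqref{OneOverF}), I would take $c=0$, giving the candidate
\[
g^{(k)}(t) = \frac{1}{2\pi i}\int_{-i\infty}^{i\infty} \frac{s^k e^{st}}{F(s)}\,ds.
\]
The hypothesis $0\in(\tau_1,\tau_2)$ forces $F(0)\ne 0$, and the canonical product representation of $F\in LP$ with at least $n+2$ zeros counted with multiplicity yields a lower bound of the form $|F(iy)|\ge C(1+|y|)^{n+2}$ for all real $y$ (each real linear factor contributes $\sqrt{1+y^2/a_j^2}$ in modulus, and the quadratic factor, if present, only helps). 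Hence for $k\le n$ the integrand is dominated by $(1+|y|)^{-2}$ along the imaginary axis, which both ensures absolute convergence and justifies differentiating under the integral sign.

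\textbf{Sign-change bound.} For the bound on sign changes, I would invoke Schoenberg's characterization of inverse Laplace transforms of reciprocals of $LP$ functions as P\'olya frequency functions, developed in \cite{HW}. Concretely, each simple factor $(1-s/a_j)e^{s/a_j}$ of $F$ contributes, after inversion of $1/F$, a one-sided exponential $h_j$ supported on a half-line whose orientation is determined by the sign of $a_j$; the quadratic factor contributes a centered Gaussian when $\gamma>0$. Thus $g$ is a convolution of functions of constant sign, hence itself of constant sign, which handles the base case $k=0$.

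For $k\ge 1$, the variation-diminishing property of P\'olya frequency functions established in \cite{HW} states that differentiation increases the number of sign changes by at most one; iterating this assertion $k$ times yields the desired bound $V(g^{(k)})\le k$. The main obstacle in carrying this out rigorously is the passage from finite to infinite Weierstrass products: one truncates $F$ to a finite subproduct $F_N$ (in which case $g$ becomes a finite convolution of exponentials and the sign-change bound for $g^{(k)}$ reduces to an elementary calculus statement about a finite signed sum of exponentials), and then passes to the limit $N\to\infty$ using the uniform decay estimate $|F(iy)|^{-1}\lesssim (1+|y|)^{-(n+2)}$ from the existence step to control the tails in the associated contour integrals.
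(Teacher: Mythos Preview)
The paper's own proof is a single sentence: ``This is \cite[Chapter IV, Theorem 5.1]{HW}.'' Your existence argument is correct and supplies detail the paper omits; the bound $|F(iy)|\gtrsim (1+|y|)^{n+2}$ coming from $n+2$ linear factors on the imaginary axis is exactly why \eqref{g_cDef} can be differentiated $n$ times under the integral.

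For the sign-change bound your sketch points to the right reference but misidentifies the mechanism. The variation-diminishing property is a statement about \emph{convolution}: convolving with a P\'olya frequency kernel does not increase the number of sign changes of the other factor. It is not a statement about differentiation, and the bare claim ``differentiation increases the number of sign changes by at most one'' is false in general (take $f(t)=1+\varepsilon\sin t$ with small $\varepsilon>0$: $f$ has no sign change, $f'$ has infinitely many). The argument in \cite{HW} instead splits off $k$ linear factors of $F$, recognizes $g^{(k)}$ via its transform $s^k/F(s)$ as the convolution of a P\'olya frequency function (built from the remaining $\ge 2$ factors of $F$) with an explicit kernel carrying at most $k$ sign changes, and only then applies variation diminution to that convolution. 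Your inductive ``one more sign change per derivative'' shortcut bypasses this decomposition, which is precisely where the $LP$ hypothesis on $F$ enters. Since you end up citing \cite{HW} regardless, the cleanest repair is to do what the paper does and invoke the theorem directly rather than paraphrase its proof.
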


\begin{proof} This is \cite[Chapter IV, Theorem 5.1]{HW}.
\end{proof}

\begin{lemma}\label{g-growth} Let $F\in LP$ have at least two zeros. Let $\tau_1<\tau_2$ be two consecutive elements in $\mc{T}_F$, and let $c\in(\tau_1,\tau_2)$. Then there exist polynomials $P_n$ and $Q_n$ such that
\begin{align}\label{g-growth-estimate}
\begin{split}
|g_c^{(n)}(t)|\le |P_n(t)| e^{\tau_1 t}\text{ as }t\to\infty,\\
|g_c^{(n)}(t)|\le |Q_n(t)| e^{\tau_2 t}\text{ as }t\to -\infty.
\end{split}
\end{align}
\end{lemma}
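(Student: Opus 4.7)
My plan is to prove the two bounds by shifting the contour in the Bromwich-type integral
\[
g^{(n)}(t) \;=\; \frac{1}{2\pi i}\int_{c-i\infty}^{c+i\infty} \frac{s^n e^{st}}{F(s)}\, ds,
\]
obtained from \eqref{g_cDef} by differentiating $n$ times under the integral sign (legitimate once $s\mapsto s^n/F(s)$ is absolutely integrable on the vertical line $\Re s=c$, which holds for $F\in LP$ having sufficiently many zeros or a nontrivial Gaussian factor $e^{-\gamma s^2}$).

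For the estimate as $t\to +\infty$ I would push the contour leftward from $\Re s=c$ to $\Re s=\tau_1-\varepsilon$ with $\varepsilon>0$ small. Because $\tau_1$ and $\tau_2$ are consecutive elements of $\mc{T}_F$ containing $c$, the only zero of $F$ captured in the shift is $\tau_1$ itself. Applying the residue theorem on rectangles with vertices $c\pm iR$, $\tau_1-\varepsilon\pm iR$ and sending $R\to\infty$ gives
\[
g^{(n)}(t) \;=\; \mathrm{Res}_{s=\tau_1}\frac{s^n e^{st}}{F(s)} \;+\; \frac{1}{2\pi i}\int_{\tau_1-\varepsilon-i\infty}^{\tau_1-\varepsilon+i\infty} \frac{s^n e^{st}}{F(s)}\, ds.
\]
If $\tau_1$ has multiplicity $m_1$, a Leibniz expansion of the pole of order $m_1$ writes the residue as $p_n(t)\,e^{\tau_1 t}$ with $p_n$ a polynomial of degree at most $m_1-1$ (the $t$-powers coming from the derivatives $\partial_s^k e^{st}=t^k e^{st}$). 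The remaining contour integral has $|e^{st}|=e^{(\tau_1-\varepsilon)t}$, hence is $O(e^{(\tau_1-\varepsilon)t})$, and this tail is absorbed into a slightly enlarged polynomial factor for $t$ large, yielding $|g^{(n)}(t)|\le |P_n(t)|\,e^{\tau_1 t}$.

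The bound as $t\to -\infty$ is obtained symmetrically by shifting the contour rightward to $\Re s=\tau_2+\varepsilon$. The residue at $\tau_2$ supplies the main term $|Q_n(t)|\,e^{\tau_2 t}$, and the shifted integral contributes an $O(e^{(\tau_2+\varepsilon)t})$ error that vanishes as $t\to -\infty$ because $\tau_2+\varepsilon>\tau_2$ is multiplied by a negative $t$.

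The main technical obstacle is the justification of the contour shift — specifically, the vanishing of the horizontal segments at $\Im s=\pm R$ as $R\to\infty$, uniformly for $\Re s$ across the relevant strip. This rests on $|F(s)|^{-1}\to 0$ on horizontal rays: when $\gamma>0$ the factor $e^{-\gamma s^2}$ in the Hadamard factorisation forces $|1/F|$ to decay like $e^{-\gamma R^2}$, and when $\gamma=0$ the canonical product (whose modulus grows polynomially in $|\Im s|$ at a rate set by the number of zeros) combined with the linear-exponential prefactors provides the required decay once $F$ has enough zeros for the initial Bromwich integral to converge. These standard order-two estimates can be imported from the LP literature without reproving them here.
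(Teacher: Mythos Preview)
Your approach is essentially the same as the paper's: the paper's proof is a two-sentence sketch that says to move the line of integration in \eqref{g_cDef} to $\Re s=d$ with $d<\tau_1$ (respectively $d>\tau_2$) by the residue theorem, and cites \cite[Chapter~V, Theorem~2.1]{HW} for the details you have spelled out (residue gives the polynomial factor, shifted integral is the lower-order tail, vanishing of the horizontal segments). Your caveat that differentiating under the integral requires $s^n/F(s)$ to be integrable on vertical lines---hence more than the stated ``at least two zeros'' when $\gamma=0$ and $n\ge 1$---is accurate, and the paper is equally informal on this point; in every application later in the paper $F=B^2$ has at least four zeros, so the issue does not arise.
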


\begin{proof} Let $t>0$ . The estimate follows from \eqref{g_cDef} by moving the integration path to $\Re s= d$ with $d<\tau_1$ using the residue theorem, see, e.g., \cite[Chapter V, Theorem 2.1]{HW}. For $t<0$ the integration path is moved to $\Re s = d$ with $d>\tau_2$.
\end{proof}

\begin{lemma}\label{g''properties} Let $F$ be an even $LP$-function with a double zero at the origin and at least one positive zero $\tau$. Assume that $F$ is positive in $(0,\tau)$, and define $g = g_{\tau/2}$ by \eqref{g_cDef}. 
\begin{enumerate}
\item The derivatives $g'$ and $g''$ exist and are nonnegative on the real line. 
\item The function $g''$ is even.
 \item If $-\tau< \Im \zeta<\tau$, then
\begin{equation}\label{g''CI}
\int_{-\infty}^\infty g''(\lambda)\cos(\zeta \lambda)\,d\lambda = -\frac{\zeta^2}{F(i\zeta)}.
\end{equation}
In particular, $F$ is real-valued on the imaginary axis.
\item If $-\tau<\Im \zeta<\tau$ and $w\in\R$, then
\begin{equation}\label{g''SI}
\int_{-\infty}^\infty g''(\lambda+w)\sin(\zeta \lambda)\,d\lambda = \frac{\zeta^2 \sin(\zeta w)}{F(i\zeta)}.
\end{equation}\end{enumerate}
\end{lemma}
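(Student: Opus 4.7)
My plan is to establish the symmetry in (2) first, use it to derive a Laplace transform identity for $g''$ from which the integral identities (3) and (4) fall out by specialisation, and finally handle the nonnegativity in (1) via Schoenberg's theorem applied to the factored function $F/z^2$.

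For (2) I would begin by invoking Lemma \ref{gc-sign-changes} to get existence of $g'$ and $g''$, since $F$ has at least four zeros (the double zero at $0$ and the zeros $\pm\tau$). Since $F$ is even, substituting $s\mapsto -s$ in \eqref{g_cDef} gives $g_{-\tau/2}(t)=g(-t)$. Deforming the contour in \eqref{g_cDef} from $\Re s=\tau/2$ to $\Re s=-\tau/2$ picks up only the residue at the double pole $s=0$, which using $F'(0)=0$ equals $2t/F''(0)$, so
\[
g(t)-g(-t)=\frac{2t}{F''(0)} \qquad (t\in\R);
\]
differentiating this twice gives $g''(t)=g''(-t)$.

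Next, Lemma \ref{g-growth} (with $\tau_1=0$, $\tau_2=\tau$) shows that $g$ is polynomially bounded at $+\infty$ and exponentially small at $-\infty$; combined with (2), $g''$ decays exponentially at both infinities. Two integrations by parts in \eqref{OneOverF} (boundary terms vanish in the strip $0<\Re z<\tau$) produce
\[
\frac{z^2}{F(z)} = \int_{-\infty}^\infty e^{-zt}\, g''(t)\, dt,
\]
which extends by analytic continuation to $|\Re z|<\tau$. Setting $z=i\zeta$ with $|\Im\zeta|<\tau$ and using evenness of $g''$ to discard the sine part yields \eqref{g''CI}, and reality of $F$ on the imaginary axis is then immediate. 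For \eqref{g''SI}, I would substitute $\mu=\lambda+w$, expand $\sin(\zeta(\mu-w))$ by the addition formula, and use $\int g''(\mu)\sin(\zeta\mu)\,d\mu=0$ by evenness, so only the cosine integral survives and \eqref{g''CI} finishes the job.

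Finally, for (1) set $H(z):=F(z)/z^2$; this lies in $LP$ with $H(0)=F''(0)/2>0$ (since $F\ge 0$ near $0$ forces $F''(0)>0$) and smallest positive zero $\tau$. The Laplace identity above reads $1/H(z)=\int e^{-zt}g''(t)\,dt$ on $|\Re z|<\tau$, identifying $g''$ with the Laplace inverse of $1/H$. Schoenberg's theorem \cite{Sch} characterises this function as a P\'olya frequency function, so $g''\ge 0$. Since $g'(t)\to 0$ as $t\to-\infty$ by Lemma \ref{g-growth}, integrating $g''\ge 0$ from $-\infty$ gives $g'\ge 0$. The main obstacle is the nonnegativity of $g''$: everything else reduces to careful Laplace/Fourier bookkeeping together with the symmetry from (2), but the nonnegativity genuinely requires the nontrivial characterisation of P\'olya frequency functions by their reciprocal Laplace transforms.
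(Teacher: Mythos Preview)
Your proposal is correct, and the overall architecture---identify $g''$ as the Laplace inverse of $z^2/F(z)$, extend to the full strip $|\Re z|<\tau$, then read off (3), (4), and nonnegativity from the LP structure of $F/z^2$---is the same as the paper's. There are two genuine differences in execution worth noting. First, for the evenness of $g''$ you compute $g(t)-g(-t)=2t/F''(0)$ by a residue shift across the double pole at $s=0$ and differentiate twice; the paper instead observes that once the Laplace identity for $g''$ is in hand with $c=0$, the Fourier representation $g''(t)=(2\pi)^{-1}\int (iu)^2 F(iu)^{-1}e^{iut}\,du$ is manifestly even in $t$ because $u^2/F(iu)$ is even. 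Your route has the small advantage of yielding the exact antisymmetric part of $g$ itself, not just of $g''$. Second, for nonnegativity you appeal to Schoenberg's characterisation of P\'olya frequency functions and then integrate to get $g'\ge 0$; the paper handles $g,g',g''$ uniformly by noting that each $g^{(j)}$ is the $g_c$-function of the LP entire function $z^{-j}F(z)$ and invoking Lemma~\ref{gc-sign-changes} with $k=0$, fixing the sign by evaluating the Laplace integral at $z=\tau/2$. These are equivalent at bottom (Lemma~\ref{gc-sign-changes} is the Hirschman--Widder form of Schoenberg's result), but the paper's version avoids the separate integration step for $g'$.
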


\begin{proof} Since $F$ has at least four zeros counted with multiplicity, \eqref{g_cDef} may be differentiated twice under the integral sign, which shows that $g'$ and $g''$ exist.  Two integration by parts show that
\begin{align}\label{z^2overF}
\frac{z^j}{F(z)} = \int_{-\infty}^\infty e^{-zt} g^{(j)}(t) dt\qquad (j\in\{0,1,2\})
\end{align}
for all $z$ with $0<\Re z< \tau$. Since $z^{-j} F(z)$ is in $LP$ for $j\in \{0,1,2\}$, Lemma \ref{gc-sign-changes} with $k=0$ implies that $g^{(j)}$ has no sign changes on the real line. Evaluation of \eqref{z^2overF} at $z = \tau/2$ shows that these derivatives are nonnegative on the real line. Since $z^{-2} F(z)$ has no zero at the origin, \eqref{z^2overF} extends to $-\tau<\Re z<\tau$ for $j=2$. Since $z^{-2} F(z)$ is an even function of $z$, \eqref{g_cDef} with $c = 0$ gives
\[
g''(t) = \frac{1}{2\pi} \int_{-\infty}^\infty \frac{(iu)^2 e^{iut}}{F(iu)} du
\]
which implies that $g''$ is even. 

 Let $\zeta$ such that $-\tau<\Im \zeta<\tau$. Since \eqref{z^2overF} extends to $-\tau<\Re z<\tau$, we have
\[
\int_{-\infty}^\infty g''(\lambda)\cos(\zeta \lambda)\,d\lambda  = \frac12 \left(\frac{(i\zeta)^2}{F(i\zeta)}+ \frac{(-i\zeta)^2}{F(-i\zeta)}\right),
\]
and \eqref{g''CI} follows since $F$ is even. Since  $\lambda \mapsto g''(\lambda) \sin\zeta\lambda$ is odd, the trigonometric identity $\sin\zeta(\lambda - w) = \cos\zeta w\sin\zeta \lambda - \cos\zeta \lambda \sin\zeta w$ implies
\[
\int_{-\infty}^\infty g''(\lambda) \sin\zeta(\lambda-w) d\lambda = -\sin\zeta w \int_{-\infty}^\infty g''(\lambda) \cos\zeta\lambda d\lambda.
\]

An application of \eqref{g''CI} gives \eqref{g''SI}.
\end{proof}

Define for positive $a$ the function $h_a:\R\to\R$ by
\begin{align}\label{h-def}
h_a(w) = -\int_{-\infty}^0 g(\lambda+w) \sin a\lambda d\lambda, 
\end{align}
and note that \eqref{g-growth-estimate} with $\tau_2 = \tau$ implies that $g(t)$ and $g'(t)$ decay exponentially as $t\to -\infty$. Hence $h_a(w)$ is finite for every $w\in\R$, and an integration by parts gives for all real $w$
\begin{align}\label{h-rep}
h_a(w) = \frac{1}{a} \int_{-\infty}^0 g'(\lambda+w) (1-\cos a\lambda) d\lambda.
\end{align}

We require bounds for the derivatives of $h_a$ and evaluations for special values.

\begin{lemma}\label{hAlphaPropsLemma} Let $a >0$. Let $F\in LP$ be even with a double zero at the origin, at least one positive zero $\tau$, and at least five zeros (counted with multiplicity). Assume that $F$ is positive in $(0,\tau)$, and define $g = g_{\tau/2}$ by \eqref{g_cDef}. 
\begin{enumerate}
\item The inequalities
\begin{align}\label{hAlphaDerivativeBounds}
0\leq h_a^{(n)}(w) \leq \frac{2}{a}g^{(n)}(w)
\end{align}
hold for  $n \in \{0,1\}$ and all real $w$, and for $n=2$ and $w \leq 0$.
\item We have the representation
\begin{align}\label{ha(0)}
h_a'(0) = \frac{g'(0)}{a}+\frac{a}{2 F(ia)}.
\end{align}
\item For all real $w$ we have
\begin{align}\label{differentOfh_alpha''}
h_a''(w)- h_a''(-w)=-\sin(a w)\frac{a^2}{F(ia)}.
\end{align}
\end{enumerate}
\end{lemma}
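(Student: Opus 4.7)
The plan is: handle (1) for $n \in \{0,1\}$ and claims (2), (3) by direct manipulation of the representation \eqref{h-rep}, using evenness of $g''$ together with the identities \eqref{g''CI} and \eqref{g''SI}; then address the subtler (1) for $n = 2$ and $w \le 0$ via a one-sided integral formula combined with monotonicity of $g''$ on $[0,\infty)$.

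For (1) at $n \in \{0,1\}$, I would rewrite \eqref{h-rep} using $1 - \cos(a\lambda) = 2\sin^2(a\lambda/2)$ as $h_a(w) = (2/a) \int_{-\infty}^0 g'(\lambda + w) \sin^2(a\lambda/2)\, d\lambda$, and differentiate once under the integral (justified by the exponential decay of $g''(\lambda + w)$ as $\lambda \to -\infty$ from Lemma \ref{g-growth}) to obtain the analogous formula with $g''$ in place of $g'$. Nonnegativity of $g'$ and $g''$ (Lemma \ref{g''properties}(1)) together with $0 \le \sin^2 \le 1$ yield the inequalities, the upper bound using $\int_{-\infty}^0 g^{(n+1)}(\lambda + w)\, d\lambda = g^{(n)}(w)$ (valid since $g^{(n)}$ decays at $-\infty$). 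For (2), specializing the formula at $w = 0$ gives $h_a'(0) = g'(0)/a - (1/a) \int_{-\infty}^0 g''(\lambda) \cos(a\lambda)\, d\lambda$; by evenness of $g''$ (Lemma \ref{g''properties}(2)), the remaining integral equals half of $\int_{-\infty}^\infty g''(\lambda) \cos(a\lambda)\, d\lambda = -a^2/F(ia)$ from \eqref{g''CI}, producing the stated formula. For (3), substituting $\lambda \to -\lambda$ in the integral defining $h_a''(-w)$ and using $g''$ even transforms it into $h_a''(-w) = \int_0^\infty g''(\lambda + w) \sin(a\lambda)\, d\lambda$; combining with $h_a''(w) = -\int_{-\infty}^0 g''(\lambda+w)\sin(a\lambda)\, d\lambda$ gives $h_a''(w) - h_a''(-w) = -\int_{-\infty}^\infty g''(\lambda + w) \sin(a\lambda)\, d\lambda$, which equals $-a^2 \sin(aw)/F(ia)$ by \eqref{g''SI}.

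The main obstacle is (1) for $n = 2$ and $w \le 0$, where the previous integration-by-parts chain would require $g'''$, which is not guaranteed to exist classically when $F$ has only the minimal four zeros (a double at $0$ and simple at $\pm\tau$). Instead, substituting $\mu = -\lambda$ in $h_a''(w) = -\int_{-\infty}^0 g''(\lambda + w) \sin(a\lambda)\, d\lambda$ and using evenness of $g''$ yields
\[
h_a''(w) = \int_0^\infty g''(\mu - w) \sin(a\mu)\, d\mu.
\]
For $w \le 0$, the shifted arguments $\mu - w$ all lie in $[0,\infty)$. Splitting the integration into half-periods of $\sin(a\mu)$ presents $h_a''(w)$ as an alternating series $\sum_{k \ge 0} (-1)^k J_k$ with $J_k = \int_0^{\pi/a} g''(\mu + k\pi/a - w) \sin(a\mu)\, d\mu \ge 0$. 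Granting monotone decreasingness of $g''$ on $[0, \infty)$, $J_{k+1} \le J_k$, so the alternating series lies in $[0, J_0]$ with $J_0 \le g''(-w) \int_0^{\pi/a} \sin(a\mu)\, d\mu = (2/a) g''(w)$ (invoking evenness again). The monotone decrease of $g''$ on $[0, \infty)$ itself follows from log-concavity: $g''$ is the bilateral Laplace inverse of $1/Q$ where $Q(z) = F(z)/z^2$ is an even Laguerre--P\'olya function with $Q(0) > 0$, so by Schoenberg's characterization of P\'olya-frequency functions $g''$ is itself P\'olya-frequency, hence log-concave; combined with evenness and nonnegativity this forces unimodality with maximum at the origin, completing the argument.
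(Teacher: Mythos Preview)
Your argument is correct. For items (2), (3), and item (1) with $n\in\{0,1\}$ you follow essentially the same route as the paper: the paper also writes $h_a^{(n)}(w)=\frac{1}{a}\int_{-\infty}^0 g^{(n+1)}(\lambda+w)(1-\cos a\lambda)\,d\lambda$ and uses $0\le 1-\cos a\lambda\le 2$ together with nonnegativity of $g',g''$; for (2) the paper starts from $h_a'(0)=-\int_{-\infty}^0 g'(\lambda)\sin a\lambda\,d\lambda$ and integrates by parts before invoking evenness of $g''$ and \eqref{g''CI}, which is equivalent to your computation; for (3) the paper splits $h_a''(w)$ as the full-line integral plus the positive half and identifies the latter with $h_a''(-w)$, the same identity you obtain after your substitution.

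The genuine difference is in (1) for $n=2$. The paper continues the integration-by-parts chain one step further and argues that $g'''$ has a single sign change at the origin (so $g'''\ge 0$ on $(-\infty,0]$), which makes the integrand nonnegative when $w\le 0$. As you note, this requires $g'''$ to exist, which Lemma~\ref{gc-sign-changes} only guarantees when $F$ has at least five zeros; under the lemma's minimal hypothesis of four zeros this step is not literally justified (it is, however, automatic in the paper's applications, where $F=B^2$ has at least six zeros). Your alternating-series argument sidesteps this: you need only that $g''$ is nonincreasing on $[0,\infty)$, which you obtain from Schoenberg's characterization of P\'olya frequency functions applied to $Q=z^{-2}F$ (log-concavity plus evenness forces unimodality with peak at $0$). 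This makes your proof valid under the stated minimal hypotheses without appealing to a possibly nonexistent third derivative, at the cost of importing the PF$_\infty\subset$ PF$_2$ log-concavity fact. One small point worth making explicit: your alternating-series bound $0\le\sum(-1)^k J_k\le J_0$ also uses $J_k\to 0$, which follows from the exponential decay of $g''$ at $+\infty$ given by Lemma~\ref{g-growth} applied to $Q$.
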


\begin{proof}   Equation \eqref{h-rep} implies for all $n$ and all real $w$
\begin{align*}
h_a^{(n)}(w) = \int_{-\infty}^0 g^{(n+1)}(\lambda+w)\frac{1-\cos(a \lambda)}{a}\, d\lambda.
\end{align*}

The functions $g$, $g'$, and $g''$ are nonnegative on $\R$ by Lemma \ref{g''properties}, and $g'''$ has exactly one change of  sign on $\R$ by Lemma \ref{gc-sign-changes} applied to $g''$. Since $g''$ is even, the sign change is located at the origin.  It follows that for all real $w$ and $n\in\{0,1\}$, as well as for $n=2$ and $w\le 0$,
\begin{align*}
0\leq h_a^{(n)}(w) \leq \int_{-\infty}^0 g^{(n+1)}(\lambda + w) \frac{2}{a}\,d\lambda = \frac{2}{a}g^{(n)}(w),
\end{align*}
which implies inequality \eqref{hAlphaDerivativeBounds}. 

To prove \eqref{ha(0)} we differentiate \eqref{h-def} and set $w=0$ to get
\begin{align*}
h_a'(0)=-\int_{-\infty}^0 g'(\lambda) \sin(a \lambda)\,d\lambda.
\end{align*}

We perform an integration by parts, apply that $g''$ is even, and use \eqref{g''CI} to obtain
\begin{align*}
h_a'(0)&=\frac{g'(0)}{a}-\frac{1}{a}\int_{-\infty}^0g''(\lambda)\cos(a \lambda)\,d\lambda\\
			&=\frac{g'(0)}{a}-\frac{1}{2 a}\int_{-\infty}^\infty g''(\lambda)\cos(a \lambda)\,d\lambda\\
			&= \frac{g'(0)}{a}+\frac{a}{2 F(ia)}
\end{align*}
which finishes the proof of \eqref{ha(0)}.  Equations \eqref{h-def} and \eqref{g''SI} give
\begin{align*}
h_a''(w) 
&= -\int_{-\infty}^\infty g''(\lambda+w)\sin(a \lambda)\,d\lambda+\int_{0}^\infty g''(\lambda+w)\sin(a \lambda)\,d\lambda\\
&=-\sin(a w)\frac{a^2}{F(ia)}+h_a''(-w),
\end{align*}
which is \eqref{differentOfh_alpha''}.
\end{proof}

The next proposition investigates two interpolations of $t_a$ in $\Re z \ge 0$ and $\Re z \le \tau$, respectively, and shows that they are representations of a single entire function $z\mapsto A(F,a,z)$ which interpolates $x\mapsto t_a(x-)$ at the zeros of $F$. See \cite{CL} for a similar construction for the cutoff of an exponential function. 

\begin{proposition}\label{continuationProp} Let $a >0$. Let $F\in LP$ be even with a double zero at the origin and at least one positive zero $\tau$. Assume that $F$ is positive in $(0,\tau)$, and define $g = g_{\tau/2}$ by \eqref{g_cDef} and $h_a$ by \eqref{h-def}.
Define
\begin{align}\label{A-def}
\begin{split}
A_1(F,a,z)&= \frac{F(z)}{a}\int_{-\infty}^0 h_a(w) e^{-zw}\,dw \text{ for } \Re{z}<\tau\\
A_2(F,a,z)&= \frac{1}{z^2+a^2} - \frac{F(z)}{a}\int_{0}^\infty h_a(w)e^{-zw}\,dw \text{ for } \Re{z}>0.
\end{split}
\end{align}
Then $z \mapsto A_1(F,a,z)$ is analytic in $\Re z < \tau$, $z \mapsto A_2(F,a,z)$ is analytic in $\Re z > 0$, and these functions are restrictions of an entire function $z\mapsto A(F,a,z)$. 
Moreover, there exists a constant $c>0$ so that
\begin{align}\label{boundForA}
|A(F,a,z)|\leq c(1+|F(z)|)
\end{align}
for all $z\in \mathbb{C}$.
\end{proposition}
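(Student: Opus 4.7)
The proposition has three parts: analyticity of $A_1$ on $\{\Re z < \tau\}$ and of $A_2$ on $\{\Re z > 0\}$; agreement on the overlap strip $0 < \Re z < \tau$ so that they glue to an entire function; and the polynomial-type bound. My plan is to base all three on two pieces of input already in hand: the pointwise estimate $0 \le h_a(w) \le (2/a) g(w)$ from Lemma \ref{hAlphaPropsLemma}(1), and the two-sided tail control for $g$ from Lemma \ref{g-growth} --- in our setting $\tau_1 = 0$ and $\tau_2 = \tau$, giving polynomial growth of $g$ at $+\infty$ and decay like a polynomial times $e^{\tau w}$ at $-\infty$. These bounds make $h_a(w) e^{-zw}$ absolutely and locally uniformly integrable on $(-\infty, 0)$ for $\Re z < \tau$ and on $(0, \infty)$ for $\Re z > 0$, so analyticity of $A_1$ and $A_2$ on the claimed half-planes follows by differentiation under the integral.

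The heart of the argument is the overlap identity. For $z$ in the strip $0 < \Re z < \tau$, the strict positivity of $\Re z$ combined with the two-sided bounds on $g$ makes the double integral absolutely convergent, so Fubini applies to $\int_{-\infty}^\infty h_a(w) e^{-zw}\, dw$ after expanding $h_a$ via \eqref{h-def}. The substitution $u = \lambda + w$ in the inner integral, together with \eqref{OneOverF}, yields
\[
\int_{-\infty}^\infty h_a(w) e^{-zw}\, dw \;=\; -\frac{1}{F(z)} \int_{-\infty}^0 \sin(a\lambda)\, e^{z\lambda}\, d\lambda \;=\; \frac{a}{F(z)(z^2+a^2)}.
\]
Multiplying by $F(z)/a$ and splitting the $w$-integral at $0$ gives $A_1(z) = A_2(z)$ on the overlap, so the two restrictions patch into one function $A(F,a,\cdot)$ analytic on $\{\Re z < \tau\} \cup \{\Re z > 0\} = \C$. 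The apparent pole of $A_2$ at $\pm ia$ is cancelled automatically, because $A_1$ (which agrees with $A_2$ on the strip) is analytic throughout $\{\Re z < \tau\}$.

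For the growth bound I would fix $\sigma_0 \in (0, \tau)$ and use the $A_1$-representation on $\{\Re z \le \sigma_0\}$ and the $A_2$-representation on $\{\Re z \ge \sigma_0\}$. On the left half-plane, the exponential tail of $g$ at $-\infty$ bounds $\int_{-\infty}^0 h_a(w) e^{-\Re z\, w}\, dw$ by a constant depending only on $\sigma_0$, so $|A_1(z)| \le C|F(z)|$. On the right, polynomial decay of $g$ at $+\infty$ against $e^{-\sigma_0 w}$ similarly bounds $\int_0^\infty h_a(w) e^{-\Re z\, w}\, dw$ by a constant, while $1/|z^2+a^2|$ is uniformly bounded outside a compact neighborhood of $\pm ia$; since those points lie in $\{\Re z = 0\} \subset \{\Re z \le \sigma_0\}$ where $A$ is represented by the analytic function $A_1$, a compactness argument absorbs the residual behavior into the additive constant. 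This delivers $|A(F,a,z)| \le c(1+|F(z)|)$ for all $z \in \C$.

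The only step that really needs care is the Fubini application in the overlap identity: $\sin(a\lambda)$ is merely conditionally integrable on $(-\infty, 0)$, so absolute convergence of the double integral hinges on the strict inequality $\Re z > 0$ providing genuine exponential decay in $\lambda$ through $e^{z\lambda}$. Once this identity is established on the overlap, everything else is bookkeeping with the pre-established polynomial-exponential bounds.
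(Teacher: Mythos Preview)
Your proof is correct and follows essentially the same path as the paper: the paper likewise uses the bound $0\le h_a\le (2/a)g$ together with Lemma~\ref{g-growth} (with $\tau_1=0$, $\tau_2=\tau$) for analyticity, establishes $A_1=A_2$ on the strip via the convolution identity $\tfrac{1}{z^2+a^2}=\tfrac{F(z)}{a}\int_\R h_a(w)e^{-zw}\,dw$ (derived in the reverse direction from yours, starting from \eqref{ta-rep} and \eqref{OneOverF}), and proves \eqref{boundForA} by splitting at $\Re z=\tau/2$. One minor wording slip: in your bound on the right half-plane you wrote ``polynomial decay of $g$ at $+\infty$'' where you mean polynomial \emph{growth}; the conclusion is unaffected since the factor $e^{-\sigma_0 w}$ dominates.
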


\begin{proof} Inequality \eqref{g-growth-estimate} with $\tau_2 = \tau$ implies that $g'(t)$ decays exponentially as $t\to -\infty$. Hence it follows from \eqref{h-rep} that
\begin{align}\label{h-estimate}
0\le h_a(w) \le \frac2a g(w)
\end{align}
for all real $w$, and \eqref{g-growth-estimate} with $\tau_2=\tau$ applied to $g$ for $t\to -\infty$ implies that the integral defining $A_1(F,a,z)$ converges absolutely in $\Re z<\tau$. Inequality \eqref{g-growth-estimate}  implies with $\tau_1 =0$ that $g$ has polynomial growth on the positive real axis, hence the integral in the definition of $A_2(F,a,z)$ converges absolutely for $\Re z>0$. It follows that $A_1$ and $A_2$ are analytic functions in $\Re z>\tau$ and $0<\Re z$, respectively.

To prove that $A_1$ and $A_2$ are analytic continuations of each other it suffices to prove that they are equal in the strip $0<\Re z<\tau$. Starting point is the identity
\begin{align}\label{ta-rep}
\frac{a}{z^2+a^2} = \int_0^\infty e^{-z\lambda} \sin a\lambda d\lambda,
\end{align}
valid for $\Re z>0$. Combining this with \eqref{OneOverF} gives for $0<\Re z<\tau$
\begin{align*}
\frac{1}{z^2+a^2} &= \frac{F(z)}{a} \int_{-\infty}^\infty \int_{0}^\infty e^{-z(w+\lambda)}g(w)\sin(a \lambda)\,d\lambda\,dw\\
						&=\frac{F(z)}{a} \int_{-\infty}^\infty \int_{0}^\infty e^{-z w}g(w-\lambda)\sin(a \lambda)\,d\lambda\,dw\\
                       &= \frac{F(z)}{a} \int_{-\infty}^\infty h_a(w)e^{-zw}\,dw.
\end{align*}

Inserting this in \eqref{A-def} shows that $A_1(F,a,z) = A_2(F,a,z)$ for $0<\Re z<\tau$. To prove \eqref{boundForA} we note that inequality \eqref{h-estimate} implies in $\Re z\le  \tau/2$
\begin{equation}
|A(F,a,z)|\leq \frac{|F(z)|}{a}\int_{-\infty}^0 |h_a(w) e^{-zw}|\,dw\leq \frac{2|F(z)|}{a^2}\int_{-\infty}^0 g(w) e^{-\tau w/2}\,dw,
\end{equation}
and an analogous calculation holds in $\Re z\ge \tau/2$.
\end{proof}

Starting with the function $A(F,a,z)$, we construct interpolations $M^\pm$ of $t_a$ that interpolate $t_a$ at the zeros of $F$ (with correct multiplicity) so that $M^\pm - t_a$ has no sign changes between two consecutive zeros of $F$. This is accomplished by selecting the value at the origin appropriately. We assume that $a>0$, and that $F\in LP$ and $\tau>0$ satisfy the assumptions of Proposition \ref{continuationProp}. We define $z\mapsto M^+(F,a,z)$ and $z\mapsto M^-(F,a,z)$ by 
\begin{align}
M^-(F,a,z)&=A(F,a,z) + \frac{h_a(0)}{a}\frac{F(z)}{z},\label{mDef}\\
\label{M^+Def} M^+(F,a,z)&= A(F,a,z) + \frac{h_a(0)}{a}\frac{F(z)}{z}+\frac{2g'(0)}{a^2}\frac{F(z)}{z^2}
\end{align}
where $A(F,a,z)$ is defined in \eqref{A-def}. Evidently $M^+$ and $M^-$ are entire functions. Recall that $\mc{T}_F\cap \R$ is the zero set of $F$. It is evident from the definitions that  $M^\pm(F,a,\xi) = t_a(\xi)$  for all real $\xi \in \mc{T}_F\backslash\{0\}$. Since $F$ has a double zero at the origin, we see that $M^-(F,a,0) =0$. 

  Since $g''$ is nonnegative and integrable on $\R$, \eqref{z^2overF} implies
\[
\frac{2}{F''(0)} = \int_{-\infty}^\infty g''(w)\,dw.
\]
As $g''$ is even and $g'(w)$  decays exponentially as $w\to -\infty$, we also have that $\int_{-\infty}^\infty g''(w)\,dw = 2g'(0)$. Hence, $z^{-2} F(z) \to 1/(2g'(0))$ as $z\to 0$ and $M^+(F,a,0) = 1/a^2$. This means that
\begin{align}\label{interpolationPoints}
M^\pm(F,a,\xi) = t_a(\xi\pm)
\end{align}
for all real $\xi \in \mc{T}_F$, where $t_a(\xi\pm)$ denotes the one sided limits at $\xi$.
 
 \begin{theorem}\label{M^+thm}  Let $a >0$. Let $F\in LP$ be even with a double zero at the origin and at least one positive zero $\tau$. Assume that $F$ is positive in $(0,\tau)$, and define $g = g_{\tau/2}$ by \eqref{g_cDef} and $h_a$ by \eqref{h-def}. If 
\[
F(ia) <0, 
\]
then
\begin{align}\label{M^+Sign}
F(x)\left\{M^+(F,a,x) - t_a(x)\right\}\geq 0
\end{align}
 for all real $x$.
 \end{theorem}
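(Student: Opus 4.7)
My plan is to derive an explicit formula for $M^+(F,a,x)-t_a(x)$ separately on the half-lines $x<0$ and $x>0$, write it as $(F(x)/x^2)$ times a remainder factor, and then show the remainder factor is non-negative on each side. Since $F$ has a double zero at the origin, $F(x)^2/x^2$ is entire and non-negative throughout $\R$, so the desired inequality $F(x)\{M^+(F,a,x)-t_a(x)\}\geq 0$ reduces to showing the remainder factor is non-negative; the case $x=0$ is trivial because $F(0)=0$.

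The first step is the negative axis. I will substitute $A=A_1$ from Proposition \ref{continuationProp} into \eqref{M^+Def} and integrate $\int_{-\infty}^0 h_a(w)e^{-xw}\,dw$ by parts twice. The boundary term proportional to $h_a(0)$ cancels the explicit $(h_a(0)/a)F(x)/x$ summand in the definition, and the boundary term proportional to $h_a'(0)$ combines with $(2g'(0)/a^2)F(x)/x^2$ via \eqref{ha(0)}. Since $t_a(x)=0$ on $x<0$, the outcome is
\[
M^+(F,a,x) = \frac{F(x)}{x^2}\left[\frac{g'(0)}{a^2} - \frac{1}{2F(ia)} + \frac{1}{a}\int_{-\infty}^0 h_a''(w) e^{-xw}\,dw\right].
\]
Each summand in the bracket is non-negative: $g'(0)\geq 0$ by Lemma \ref{g''properties}(1); $-1/(2F(ia))>0$ by the hypothesis $F(ia)<0$; and the integral is non-negative because $h_a''\geq 0$ on $(-\infty,0]$ by Lemma \ref{hAlphaPropsLemma}(1).

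Turning to $x>0$, I will use $A=A_2$ from Proposition \ref{continuationProp}; here the $1/(x^2+a^2)$ piece of $A_2$ cancels $t_a(x)$, and an analogous two-step integration by parts produces the remainder factor
\[
\frac{g'(0)}{a^2} - \frac{1}{2F(ia)} - \frac{1}{a}\int_0^\infty h_a''(w) e^{-xw}\,dw.
\]
The integral term now has the wrong sign and $h_a''$ is not known to be non-negative on $[0,\infty)$; \emph{this is the main obstacle.} My plan to resolve it is to invoke the symmetry relation \eqref{differentOfh_alpha''}, rewritten as $h_a''(w) = h_a''(-w) - \sin(aw)\,a^2/F(ia)$, together with $\int_0^\infty \sin(aw)e^{-xw}\,dw = a/(x^2+a^2)$. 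After substituting and folding the two $F(ia)$-contributions into $(a^2-x^2)/(2(x^2+a^2)F(ia))$, the remainder factor becomes
\[
B_+(x) = \frac{g'(0)}{a^2} + \frac{a^2-x^2}{2(x^2+a^2)F(ia)} - \frac{1}{a}\int_{-\infty}^0 h_a''(u) e^{xu}\,du.
\]

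The final step is to establish $B_+(x)\geq 0$ for $x>0$ by a boundary-plus-monotonicity argument. I expect $B_+(0^+)=0$ to come out from $\int_{-\infty}^0 h_a''(u)\,du = h_a'(0)$ (using $h_a'(-\infty)=0$, which follows from $h_a'\leq (2/a)g'$ and Lemma \ref{g-growth}) combined with \eqref{ha(0)}. I then plan to show $B_+$ is non-decreasing by differentiating:
\[
B_+'(x) = \frac{-2a^2 x}{(x^2+a^2)^2 F(ia)} - \frac{1}{a}\int_{-\infty}^0 u\,h_a''(u) e^{xu}\,du.
\]
The first summand is positive because $F(ia)<0$ and $x>0$, and the second is non-negative because $u\leq 0$ and $h_a''(u)\geq 0$ on the region of integration. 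With $B_+(0^+)=0$ and $B_+$ non-decreasing on $(0,\infty)$, the conclusion $B_+\geq 0$ follows, which completes the proof.
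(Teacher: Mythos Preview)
Your argument is correct and follows the paper's approach almost exactly: the same integration-by-parts reduction to the bracketed remainder factors on each half-line, the same use of the symmetry relation \eqref{differentOfh_alpha''} to convert the $x>0$ integral to one over $(-\infty,0]$ where $h_a''\ge 0$ is available, and the same evaluation via \eqref{ha(0)}. The only cosmetic difference is in the final step for $x>0$: the paper bounds $\int_0^\infty h_a''(-w)e^{-xw}\,dw\le\int_0^\infty h_a''(-w)\,dw=h_a'(0)$ directly (using $e^{-xw}\le 1$) and checks the resulting $x$-dependent lower bound is nonnegative, whereas you package the same sign observations as $B_+(0^+)=0$ together with $B_+'\ge 0$.
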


\begin{proof}  Consider first $x<0$. An expansion of the second term in \eqref{M^+Def} in a Laplace transform together with \eqref{A-def} gives
\begin{align}\label{M+ta-eq1}
M^+(F,a,x)-t_a(x) = \frac{F(x)}{a}\int_{-\infty}^0 (h_a(w)-h_a(0))e^{-x w}\,dw +\frac{F(x)}{x^2}\frac{2g'(0)}{a^2}.
\end{align}

Two integration by parts and \eqref{ha(0)} lead to
\begin{align*}
\int_{-\infty}^0 (h_a(w)-h_a(0))e^{-x w}\,dw &= \frac{1}{x^2} \int_{-\infty}^0 h_a''(w) e^{-wx} dx - \frac{h_a'(0)}{x^2}\\
&=\frac{1}{x^2} \int_{-\infty}^0 h_a''(w)e^{-x w}\,dw -\frac{g'(0)}{ax^2}-\frac{a}{2 x^2 F(ia)},
\end{align*}
and inserting this in \eqref{M+ta-eq1} gives
\begin{align}\label{M+ta-eq2}
M^+(F,a,x)-t_a(x)=\frac{F(x)}{x^2}\left( \frac{g'(0)}{a^2}-\frac{1}{2 F(ia)} +\frac{1}{a} \int_{-\infty}^0 h_a''(w)e^{-x w}\,dw  \right).
\end{align}

By assumption $-F(ia)>0$, and \eqref{hAlphaDerivativeBounds} implies $h_a''(w)\ge 0$. Since by assumption $z^{-1} F(z)$ is a LP-function that is positive in $(0,\tau)$, it follows from Lemma \ref{g''properties} that $g'(0)>0$. Hence \eqref{M^+Sign} is shown for $x<0$.

Let $x>0$. From \eqref{A-def}, \eqref{h-def}, and \eqref{ha(0)} we get
\begin{align*}
\begin{split}
M^+(F,a,x)-t_a(x) &= -\frac{F(x)}{a}\int_0^\infty h_a(w)e^{-x w}\,dw +\frac{F(x)}{a}\frac{h_a(0)}{x}+\frac{F(x)}{x^2}\frac{2g'(0)}{a^2}\\
&= -\frac{F(x)}{a}\int_0^\infty (h_a(w)-h_a(0))e^{-x w}\,dw+\frac{F(x)}{x^2}\frac{2g'(0)}{a^2},
\end{split}
\end{align*}
and, analogously to \eqref{M+ta-eq2}, we obtain the representation
\begin{align}\label{M+ta-eq3}
M^+(F,a,x)-t_a(x)=\frac{F(x)}{x^2}\left(\frac{g'(0)}{a^2}-\frac{1}{2 F(ia)}-\frac{1}{a}\int_0^\infty h_a''(w)e^{-x w}\,dw \right)
\end{align}
for $x>0$. In order to investigate the sign of the right hand side, we multiply \eqref{differentOfh_alpha''} by $e^{-xw}$ and integrate $w$ over $[0,\infty)$ to get with \eqref{ta-rep} 
\begin{align*}
\int_0^\infty (h_a''(w) - h_a''(-w)) e^{-xw} dw &= -\frac{a^2}{F(ia)} \int_0^\infty e^{-xw}\sin(a w) dw\\
&= -\frac{a^2}{F(ia)}\frac{a}{x^2+a^2}. 
\end{align*}

Hence
\begin{align}\label{M+ta-eq4}
-\frac{1}{a} \int_0^\infty h_a''(w) e^{-xw} dw = \frac{a^2}{F(ia)} \frac{1}{x^2+a^2} - \frac{1}{a} \int_0^\infty h_a''(-w) e^{-xw} dw.
\end{align}

Since $h_a''(-w)\ge 0$ for $w\geq 0$, we have from \eqref{ha(0)}
\begin{align}\label{M+ta-eq5}
\int_0^\infty h_a''(-w) e^{-xw} dw \le \int_0^\infty h_a''(-w) dw =\frac{g'(0)}{a} +\frac{a}{2F(ia)}.
\end{align}

We multiply \eqref{M+ta-eq5} by $-a^{-1}$ and insert the resulting inequality into \eqref{M+ta-eq4} to get
\[
-\frac{1}{a} \int_0^\infty h_a''(w) e^{-xw} dw  \ge \frac{a^2}{F(ia)} \frac{1}{x^2+a^2} - \frac{g'(0)}{a^2} -\frac{1}{2F(ia)}.
\]

Inserting this into \eqref{M+ta-eq3} gives
\[
\frac{M^+(F,a,x)-t_a(x)}{F(x)} \ge \frac{1}{F(ia)}\frac{1}{x^2}\left(\frac{1}{(x/a)^2+1} - 1\right),
\]
which is nonnegative since $F(ia)<0$. This proves \eqref{M^+Sign} for $x>0$.
\end{proof}
 
 \begin{theorem}\label{M^-thm} Let $a >0$. Let $F\in LP$ be even with a double zero at the origin and at least one positive zero $\tau$. Assume that $F$ is positive in $(0,\tau)$, and define $g = g_{\tau/2}$ by \eqref{g_cDef} and $h_a$ by \eqref{h-def}. Then
\begin{align}\label{M^-Sign}
F(x)\left\{M^-(F,a,x) - t_a(x)\right\}\leq 0
\end{align}
holds for all real $x$. 
\end{theorem}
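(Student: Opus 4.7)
The plan is to adapt the argument of Theorem \ref{M^+thm}, exploiting the fact that $M^-$ differs from $M^+$ by the absence of the term $\tfrac{2g'(0)}{a^2}\tfrac{F(z)}{z^2}$. This makes the analysis simpler and in particular removes the need for any hypothesis on the sign of $F(ia)$; only the nonnegativity of $h_a'$ from Lemma \ref{hAlphaPropsLemma}(1) will be used. I would treat $x<0$ and $x>0$ separately, noting that the value at $x=0$ is automatic because $F$ has a double zero there.

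For $x<0$, I would use the representation $A(F,a,x)=A_1(F,a,x)$ together with the identity $1/x=-\int_{-\infty}^0 e^{-xw}\,dw$ to absorb the term $\tfrac{h_a(0)}{a}\tfrac{F(x)}{x}$ into the Laplace integral. Since $t_a$ vanishes on the negative real axis, this gives
\[
M^-(F,a,x)-t_a(x)=\frac{F(x)}{a}\int_{-\infty}^0(h_a(w)-h_a(0))e^{-xw}\,dw.
\]
A single integration by parts (with the $w=0$ boundary term vanishing because $h_a(w)-h_a(0)$ does, and the $w=-\infty$ term vanishing by \eqref{h-estimate} together with the exponential decay of $g$ from Lemma \ref{g-growth}) reduces this to
\[
M^-(F,a,x)-t_a(x)=\frac{F(x)}{ax}\int_{-\infty}^0 h_a'(w)e^{-xw}\,dw.
\]
Multiplying by $F(x)$ produces $F(x)^2/(ax)$ times a nonnegative integral (using $h_a'\ge 0$). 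Since $ax<0$, the product is $\le 0$, establishing \eqref{M^-Sign} on the negative axis.

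For $x>0$ the argument is symmetric: use $A(F,a,x)=A_2(F,a,x)$ and $1/x=\int_0^\infty e^{-xw}\,dw$ to rewrite
\[
M^-(F,a,x)-t_a(x)=-\frac{F(x)}{a}\int_0^\infty(h_a(w)-h_a(0))e^{-xw}\,dw,
\]
the term $1/(x^2+a^2)$ in $A_2$ being cancelled by $t_a(x)$. An integration by parts, with the boundary term at $+\infty$ controlled by the polynomial growth bound on $h_a$ inherited from $g$ via Lemma \ref{g-growth} and \eqref{h-estimate}, yields
\[
M^-(F,a,x)-t_a(x)=-\frac{F(x)}{ax}\int_0^\infty h_a'(w)e^{-xw}\,dw.
\]
Multiplication by $F(x)$ produces $-F(x)^2/(ax)$ times a nonnegative integral, and since $ax>0$ the product is again $\le 0$.

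The step requiring any care is the justification of boundary terms in the integration by parts, but this is routine given the decay and growth estimates for $g$. Unlike the majorant case, no cancellation across $x=0$ involving the evaluation $F(ia)$ is needed, since the absence of the $1/z^2$ correction means that the identity \eqref{differentOfh_alpha''} never has to be invoked. For this reason I anticipate the proof to be shorter and more direct than that of Theorem \ref{M^+thm}.
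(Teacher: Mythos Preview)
Your proposal is correct and follows essentially the same route as the paper: both arguments derive the representations
\[
M^-(F,a,x)-t_a(x)=\pm\frac{F(x)}{a}\int (h_a(w)-h_a(0))e^{-xw}\,dw
\]
on the two half-lines and then use $h_a'\ge 0$ from Lemma~\ref{hAlphaPropsLemma}(1). The only difference is that the paper stops here---monotonicity of $h_a$ gives the sign of $h_a(w)-h_a(0)$ on each half-line directly---whereas you perform one further integration by parts to make $h_a'$ appear explicitly in the integrand. That extra step is valid (your boundary-term justifications are fine) but unnecessary; the paper's version is slightly shorter for the same reason you already identified, namely that no cancellation involving $F(ia)$ or \eqref{differentOfh_alpha''} is needed.
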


\begin{proof}  We note that the integral representations for $M^+$ are valid even if $F(ia)$ is not negative. From the definition of $M^-$ and  \eqref{M+ta-eq1} we obtain for $x<0$ the representation
\begin{align}\label{M-ta-eq2}
M^-(F,a,x) - t_a(x) = \frac{F(x)}{a} \int_{-\infty}^0 (h_a(w) - h_a(0)) e^{-xw} dw,
\end{align}
and since $h_a'(w)\ge 0$ for real $w$, it follows that $h_a(w)- h_a(0)\le 0$ for $w\le 0$ which shows \eqref{M^-Sign} for $x<0$. Analogously, for $x>0$ 
\begin{align}\label{M-ta-eq3}
M^-(F,a,x) - t_a(x) = -\frac{F(x)}{a} \int_{-\infty}^0 (h_a(w) - h_a(0)) e^{-xw} dw,
\end{align}
which gives \eqref{M^-Sign} in this range.
\end{proof}

\begin{proposition}  The functions $M^+$ and $M^-$ from Theorems  \ref{M^+thm} and  \ref{M^-thm} satisfy
 \begin{align}\label{M-integrability-estimate}
 |M^\pm(F,a,x) - t_a(x)| = \mc{O}\left( \frac{ F(x)}{ 1+x^2}\right)
 \end{align}
 for all real $x$.
 \end{proposition}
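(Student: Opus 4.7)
The plan is to read off the required estimate from the integral representations of $M^\pm - t_a$ that were derived in the proofs of Theorems~\ref{M^+thm} and~\ref{M^-thm}. Formulas \eqref{M+ta-eq2} and \eqref{M+ta-eq3} express
\[
M^+(F,a,x) - t_a(x) = \frac{F(x)}{x^2}\, R_+(x)
\]
with $R_+$ given by different closed-form expressions on the two half lines. For $M^-$ the analogous identity
\[
M^-(F,a,x) - t_a(x) = \frac{F(x)}{a x^2}\, R_-(x)
\]
follows by applying two integrations by parts to \eqref{M-ta-eq2} and \eqref{M-ta-eq3}, exactly mirroring the passage from \eqref{M+ta-eq1} to \eqref{M+ta-eq2}. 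In every case, the only $x$-dependent piece of $R_\pm(x)$ is a Laplace integral of $h_a''$, so the argument reduces to establishing uniform boundedness of those integrals.

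For the integration over $(-\infty,0]$ this is immediate: on this half line $h_a''\ge 0$ by Lemma~\ref{hAlphaPropsLemma}, and $e^{-xw}\le 1$ whenever $x\le 0$, so
\[
\int_{-\infty}^0 h_a''(w) e^{-xw}\,dw \le \int_{-\infty}^0 h_a''(w)\,dw = h_a'(0),
\]
since $h_a'(w)\to 0$ as $w\to -\infty$ by \eqref{hAlphaDerivativeBounds} together with the exponential decay of $g'$ supplied by Lemma~\ref{g-growth}. For the integration over $[0,\infty)$ with $x>0$ neither the pointwise size nor the integrability of $h_a''$ is directly at hand, and this is the point at which the functional identity \eqref{differentOfh_alpha''} becomes essential. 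Multiplying \eqref{differentOfh_alpha''} by $e^{-xw}$, integrating over $[0,\infty)$, substituting $s=-w$ in one piece and evaluating $\int_0^\infty \sin(aw) e^{-xw}\,dw = a/(x^2+a^2)$ yields
\[
\int_0^\infty h_a''(w) e^{-xw}\,dw = \int_{-\infty}^0 h_a''(s) e^{xs}\,ds - \frac{a^3}{F(ia)\,(x^2+a^2)}.
\]
The first summand is bounded by $h_a'(0)$ by the previous argument (now with $xs\le 0$), while the rational correction is bounded by $a/|F(ia)|$, both uniformly in $x\ge 0$.

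Combining the two cases produces $|M^\pm(F,a,x) - t_a(x)| \le C F(x)/x^2$ for every $x\ne 0$. Since $x^2 \ge (1+x^2)/2$ whenever $|x|\ge 1$, this delivers the asserted $O(F(x)/(1+x^2))$ bound in the large-$x$ regime. For the bounded range $|x|\le 1$, the factor $F(x)/x^2$ extends continuously to a positive value at the origin by \eqref{z^2overF}, and $M^\pm - t_a$ is continuous on $\R\setminus\{0\}$ with matching one-sided values at $0$ furnished by \eqref{interpolationPoints}; hence the local estimate is absorbed into a possibly larger constant, completing the argument.

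The main obstacle is the uniform bound on $\int_0^\infty h_a''(w) e^{-xw}\, dw$ for small positive $x$. The pointwise estimate \eqref{hAlphaDerivativeBounds} controls $h_a''$ only on $(-\infty,0]$, and the oscillatory term $\sin(aw)\,a^2/F(ia)$ that appears in \eqref{differentOfh_alpha''} is not separately integrable on $[0,\infty)$. The identity \eqref{differentOfh_alpha''} is exactly the tool needed to reflect the integration back onto the controlled half line at the cost of a transparent rational correction that is trivially bounded on $[0,\infty)$.
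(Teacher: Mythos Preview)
Your argument is correct and follows the paper's approach: the paper's proof is a one-line pointer to the representations \eqref{M+ta-eq1}--\eqref{M+ta-eq3} and \eqref{M-ta-eq2}--\eqref{M-ta-eq3}, and you have simply spelled out the details, including the uniform bound on $\int_0^\infty h_a''(w)e^{-xw}\,dw$ via the reflection identity (which is already the content of \eqref{M+ta-eq4}--\eqref{M+ta-eq5}). The two integrations by parts you perform on \eqref{M-ta-eq2}--\eqref{M-ta-eq3} for $M^-$ exactly parallel the passage from \eqref{M+ta-eq1} to \eqref{M+ta-eq2}, so the route is the same.
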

 
 \begin{proof}  Inequalities \eqref{M+ta-eq1} and \eqref{M+ta-eq3} yield \eqref{M-integrability-estimate} for $M^+$, while \eqref{M-ta-eq2} and \eqref{M-ta-eq3} imply \eqref{M-integrability-estimate} for $M^-$.
 \end{proof}


\section{Extremal functions for $t_a$ in de Branges spaces}\label{DBS}

In this section we prove Theorem \ref{T1}.  Recall that $E$ is an HB function that satisfies  (I) $E$ is of bounded type in $\C^+$ with mean type $\tau$, (II) $E$ has no  real zeros, (III) $E^*(z) = E(-z)$ for all $z$, (IV) $B \notin \mc{H}(E)$. These conditions imply that $E$ has certain properties that we collect in the following lemma. We define the positive Borel measure $\mu_E$ by
\begin{align}\label{DBS-measure}
\mu_E(A) = \int_A \frac{dx}{|E(x)|^2}.
\end{align}

\begin{lemma}\label{E-properties} If $E$ is an HB function satisfying (I) - (IV), then 
\begin{enumerate}
\item $E$ has exponential type $\tau$,
\item Every nonnegative $F\in\mc{A}_1(2\tau, \mu_E)$ can be factored as $F = UU^*$ with $U\in\mc{H}(E)$,
\item $A = (1/2) (E+ E^*) $ is even and $B = (i/2)(E-E^*)$ is odd,
\item For every $U\in\mc{H}(E)$ the identity
\begin{align}\label{qu-HE}
\int_{-\infty}^\infty \left| \frac{U(x)}{E(x)}  \right|^2 dx = \sum_{\xi \in \mc{T}_B} \frac{|U(\xi)|^2}{K(\xi,\xi)}
\end{align}
is valid. (Recall that $\mc{T}_B$ is the set of zeros of $B$.)
\end{enumerate}
\end{lemma}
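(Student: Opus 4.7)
The plan is to handle the four items separately, since this lemma packages several standard de Branges-space facts together with a short consequence of (III). Part (1) is immediate from Krein's theorem, already invoked in the paragraph preceding the statement; no additional argument is needed. For part (3), I would first translate condition (III) into the algebraic identity $E(w) = E^{*}(-w)$: the hypothesis that $z\mapsto E(iz)$ is real entire means $\overline{E(i\overline{z})} = E(iz)$, and after the substitution $w = iz$ this becomes $E(w) = \overline{E(-\overline{w})} = E^{*}(-w)$. Substituting into the definitions of $A$ and $B$ then yields $A(-z) = A(z)$ and $B(-z) = -B(z)$ in one line each.

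Part (2) is a classical factorization of nonnegative entire functions of bounded type that are integrable against $d\mu_E$. The plan is to use that $E$ is of bounded type with mean type $\tau$ in $\C^+$ (by (I)), so that a nonnegative $F \in \mc{A}_1(2\tau,\mu_E)$ is itself of bounded type in $\C^+$ with mean type at most $2\tau$. Since $F \geq 0$ on $\R$, its real zeros occur with even multiplicity and its nonreal zeros come in complex-conjugate pairs. Splitting the zeros so that $U$ receives half of the real zeros and one representative of each complex-conjugate pair (chosen from $\C^-$) produces an entire $U$ with $UU^{*} = F$. One then verifies that $U/E$ and $U^{*}/E$ have bounded type and nonpositive mean type in $\C^+$, so that $U\in\mc{H}(E)$. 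The references for this step are de Branges \cite{B} and Holt-Vaaler \cite{HV}.

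Part (4) follows from the de Branges quadrature theorem: hypothesis (IV), $B\notin \mc{H}(E)$, is exactly what is needed so that the normalized reproducing kernels $K(\xi,\cdot)/K(\xi,\xi)^{1/2}$ at the real zeros $\xi$ of $B$ form an orthonormal basis of $\mc{H}(E)$ (this is de Branges' Theorem 22 with the axis function proportional to $B$). Identity \eqref{qu-HE} then follows by expanding $U$ in this basis and equating $L^2(\mu_E)$-norms. The only step that is not a direct citation is the factorization in (2), and the main obstacle there is the mean-type bookkeeping required to place $U/E$ in the Smirnov class of $\C^+$; this is technical but completely standard.
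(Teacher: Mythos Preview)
Your proposal is correct and matches the paper's own proof almost exactly: the paper dispatches (1) by Krein's theorem, calls (3) evident, cites de Branges' Theorem 22 for (4), and for (2) invokes the standard factorization (citing Levin~\cite[Appendix V]{Lev}) together with the observation that $F\in\mc{A}_1(2\tau,\mu_E)$ implies $\int_\R \log^+|F(x)|/(1+x^2)\,dx<\infty$ via Jensen's inequality~\cite[Lemma~12]{HV}. The only difference is one of emphasis in part (2): the paper singles out the logarithmic integrability condition as the key input to the factorization theorem, whereas you focus on the subsequent mean-type verification for $U/E$; both pieces are needed and both are standard.
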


\begin{proof} Since $E$ is of bounded type with mean type $\tau$ in $\C^+$, it follows from \eqref{HB} that $E^*$ is of bounded type with mean type $\le \tau$ in $\C^+$. It follows from Krein's theorem \cite[Theorems 6.17 and 6.18]{RR} that $E$ has exponential type $\tau$. The second property follows from \cite[Appendix V]{Lev} together with the observation that $F\in\mc{A}_1(2\tau, \mu_E)$ implies 
\[
\int_{-\infty}^\infty \frac{\log^+|F(x)|}{1+x^2} dx<\infty.
\]

This follows from Jensen's inequality, see, e.g., the proof of \cite[Lemma 12]{HV}. The third property is evident, and \eqref{qu-HE} is \cite[Theorem 22]{B}.
\end{proof}

\begin{proof}[Proof of Theorem \ref{T1}] Assume that $E$ satisfies (I) - (IV).  Inequality \eqref{HB} implies that $A$ and $B$ have only real zeros, and since $E$ has no real zeros, it follows that $A$ and $B$ have no common zeros. By \cite[Problem 48]{B} there exists a continuous increasing function $\varphi$ such that $E(x) \exp(i\varphi(x))$ is real valued for all real $x$. The zeros of $A$ are the values $x$ where $\varphi(x) = \frac{\pi}{2}+k\pi$ for some $k\in \Z$, and the zeros of $B$ are the values $x$ where $\varphi(x) = k\pi$ for some $k\in\Z$. It follows that the zeros interlace. It follows from \cite[Problem 47]{B} applied to $f = A/B$ that the zeros of $B$ are simple (see also the proof of Theorem 22 in \cite{B}). Similarly, the zeros of $A$ are simple. 

 Since $E$ has exponential type $\tau$, it follows that $A$ and $B$ are also entire functions of exponential type $\tau$. Hence they, and their squares, are Laguerre-P\'olya entire functions. Evidently, $B^2$ is an even LP function that has a double zero at the origin, and the results of Section \ref{interpolation} are applicable.

We define the entire functions $T_a^+$ and $T_a^-$ by
\begin{align*}
T^+_a(z) &= M^+(B^2, a, z)\\
T_a^-(z) &= M^-(B^2,a,z)
\end{align*}
with $M^-$ and $M^+$ as in \eqref{mDef} and \eqref{M^+Def}.
\medskip

We note that $E^*(z) = E(-z)$ implies in particular that $\overline{B(ix)} = -B(ix)$ for real $x$, i.e., $\Re B(ix) =0$ for real $x$. It follows then that
\[
\big(B(ix)\big)^2 <0
\]
for every real $x$, hence $F=B^2$ satisfies the assumptions of Theorem \ref{M^+thm}.  Since $B^2\ge 0$ on $\R$, inequality \eqref{M^+Sign} implies
\[
T^+_a(x) \ge t_a(x)
\] 
for all real $x$, and \eqref{interpolationPoints} implies that 
\[
T^+_a(\xi) = t_a(\xi)
\]
for all $\xi$ with $B(\xi)=0$. Since $B^2/E^2$ is bounded on $\R$, it follows from \eqref{M-integrability-estimate} that
\[
\int_{-\infty}^\infty \frac{T_a^+(x) - t_a(x)}{|E(x)|^2} dx<\infty.
\]

A similar argument gives the same statement for $t_a - T_a^-$. Since $T_a^- \le t_a\le T_a^+$ we obtain that $|T_a^+ - T_a^-|$  is integrable with respect to $\mu_E = |E(x)|^{-2}dx$.  It follows from Lemma \ref{E-properties} that there exists $U\in\mc{H}(E)$ such that
\begin{align}\label{T-decomposition}
T_a^+ - T_a^- &= UU^*.
\end{align}

We prove next the optimality of $T_a^+$. Let $F$ be a function of type $2\tau$ with $F\ge t_a$ on $\R$. We may assume that
\[
\int_{-\infty}^\infty \frac{ F(x)  - t_a(x)}{|E(x)|^2} dx <\infty,
\]
(since otherwise there is nothing to show). The inequality $T_a^-\le t_a\le F$ gives
\[
|F(x) - T_a^-(x)| \le  (F(x) - t_a(x)) + (t_a(x) - T_a^-(x))
\]
and hence $F - T_a^-$ is an entire function of exponential type $2\tau$ that is integrable with respect to $\mu_E$. Evidently, $F - T_a^- \ge F - t_a \ge 0$. Applying Lemma \ref{E-properties} again implies that there exists $V\in\mc{H}(E)$ such that
\begin{align}\label{F-decomposition}
F - T_a^- &= VV^*,
\end{align}

It follows from \eqref{T-decomposition} and \eqref{F-decomposition} that
\[
F- T_a^+  =  VV^* - UU^*.
\]

An application of Lemma \ref{E-properties} (4) to $U$ and $V$ together with $T_a^+(\xi) = t_a(\xi)$ for all $\xi\in\R$ with $B(\xi)=0$ implies
\[
\int_{-\infty}^\infty \frac{F(x) - T_a^+(x)}{|E(x)|^2} dx = \sum_{B(\xi)=0} \frac{F(\xi) - t_a(\xi)}{K(\xi,\xi)}\ge 0,
\]
hence $T_a^+$ is extremal.

An analogous calculation (which we omit) shows that $T_a^-$ is an extremal minorant. It remains to prove that 
\[
\int_{-\infty}^\infty (T_a^+(x) - T_a^-(x)) \frac{dx}{|E(x)|^2} = \frac{1}{a^2 K(0,0)}.
\] 

It follows from \eqref{T-decomposition} and Lemma \ref{E-properties} (4) that
\[
\int_{-\infty}^\infty (T_a^+(x) - T_a^-(x)) \frac{dx}{|E(x)|^2} = \sum_{B(\xi)=0} \frac{T_a^+(\xi) - T_a^-(\xi)}{K(\xi,\xi)}.
\]

The only non-zero summand is the term for $\xi=0$. Since $T_a^+(0) - T_a^-(0) = t_a(0) - t_a(0-) =1/a^2$, the proof is complete.
\end{proof}


\section{De Branges space and optimal functions for the vanishing condition}\label{section-vanishing-space}

Define the Borel measure $\mu_a$ by
\[
\mu_a(B) = \int_B (x^2+a^2) dx.
\]

Let $a>0$ and recall that $E_a$ is given by
\[
E_a(z) = \sqrt{\frac{2}{\sinh(2\pi a)}}\frac{\sin\pi(z+ia)}{(z+ia)}.
\]

We prove that $E_a$ is a Hermite-Biehler function whose associated de Branges space is isometrically equal to $\mc{A}_2(\pi,\mu_a)$.

\begin{theorem}\label{isometry} Let $a>0$. The function $E_a$ satisfies \eqref{HB} and properties (I) - (IV). Moreover, the space $\mc{A}_2(\pi,\mu_a)$ is isometrically equal to $\mc{H}(E_a)$.
\end{theorem}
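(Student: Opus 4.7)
The plan is to verify properties (I)--(IV) and the Hermite--Biehler inequality by direct computation with the explicit formula for $E_a$, and then establish the isometry through a Fourier--series identity.

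I would begin from the formula
\[
|E_a(x)|^2 = \frac{\cosh(2\pi a) - \cos(2\pi x)}{\sinh(2\pi a)(x^2+a^2)},
\]
which follows from the identity $|\sin\pi(u+iv)|^2 = \tfrac{1}{2}(\cosh 2\pi v - \cos 2\pi u)$ together with the definition of $E_a$. The zeros of $E_a$ lie at $z = n - ia$ for $n \in \Z\setminus\{0\}$, all in $\C^-$, so (II) holds. A direct substitution gives $E_a(iz) = c\,\sinh\pi(z+a)/(z+a)$ with $c = (2/\sinh 2\pi a)^{1/2}$, which has real Taylor coefficients and yields (III). Examining $|E_a(iy)|$ as $y\to\infty$ gives mean type $\pi$ and bounded type in $\C^+$, giving (I). For (IV), the explicit form $B_a(z) = c[a\sin\pi z\cosh\pi a - z\cos\pi z\sinh\pi a]/(z^2+a^2)$ produces $|B_a/E_a|^2 \asymp \cos^2(\pi x)$ as $|x|\to\infty$, which is not integrable. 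The Hermite--Biehler inequality reduces, after writing $z = x+iy$, to strict monotonicity of $s \mapsto (\cosh 2\pi s - \cos 2\pi x)/(x^2 + s^2)$ on $[0,\infty)$; using $\cosh 2\pi s - \cos 2\pi x = 2(\sinh^2\pi s + \sin^2\pi x)$, this splits into the two elementary inequalities $\pi s\cosh\pi s > \sinh\pi s$ and $\pi\sinh(2\pi s)/(2s) \ge \pi^2 \ge \sin^2(\pi x)/x^2$ for $s > 0$.

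For the isometry itself, the explicit formula above shows $(x^2+a^2)|E_a(x)|^2$ is bounded above and below by positive constants, so the integrability conditions $\int |F|^2/|E_a|^2 dx < \infty$ and $\int |F|^2(x^2+a^2)dx < \infty$ are equivalent for entire $F$ of type $\pi$. To match the numerical values of the norms I would use the Poisson-kernel expansion
\[
\frac{\sinh(2\pi a)}{\cosh(2\pi a) - \cos(2\pi x)} = 1 + 2\sum_{k=1}^\infty e^{-2\pi a k}\cos(2\pi k x),
\]
obtained from $\sum_{k\in\Z} r^{|k|}e^{ikx} = (1-r^2)/(1-2r\cos x+r^2)$ with $r = e^{-2\pi a}$. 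Given $F \in \mc{A}_2(\pi, \mu_a)$, the auxiliary function $G(z) = (z+ia)F(z)$ is entire of type $\pi$ with $\int |G|^2 dx = \int |F|^2(x^2+a^2)dx < \infty$, hence $G \in \mc{A}_2(\pi)$. Since $|G|^2 = G\,G^*$ is a product of two functions in $\mc{A}_2(\pi)$, its Fourier transform is continuous and supported in $[-2\pi, 2\pi]$, and in particular vanishes at $\xi = 2\pi k$ for every nonzero integer $k$; equivalently, $\int |G(x)|^2 \cos(2\pi k x)\,dx = 0$ for $k \in \Z\setminus\{0\}$. Term-by-term integration of the expansion against $|G|^2$ (justified by the geometric decay of the coefficients and $|G|^2 \in L^1$) then gives
\[
\int \frac{|F(x)|^2}{|E_a(x)|^2}\,dx = \int |G(x)|^2 \cdot \frac{\sinh(2\pi a)}{\cosh(2\pi a) - \cos(2\pi x)}\,dx = \int |G(x)|^2\,dx = \int |F(x)|^2(x^2+a^2)\,dx.
\]

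To complete the identification I would verify the de Branges bounded-type and mean-type conditions. For $F \in \mc{A}_2(\pi,\mu_a)$, the quotient $F/E_a = G/[c\sin\pi(z+ia)]$ is analytic in $\C^+$ because $\sin\pi(z+ia)$ has all its zeros in $\C^-$; the $L^2$-boundary values supplied by the norm identity yield bounded type in $\C^+$, and the comparison of exponential types of $F$ and $E_a$ gives mean type $\le 0$. The symmetric argument applied to $(z-ia)F^*$ handles $F^*/E_a$. Conversely, $F \in \mc{H}(E_a)$ forces exponential type $\le \pi$ from the mean-type conditions and forces $\int|F|^2(x^2+a^2)dx < \infty$ by the uniform equivalence of the two integrands, placing $F$ in $\mc{A}_2(\pi, \mu_a)$. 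The main obstacle is the Fourier-support argument for $|G|^2$ and the justification of the termwise integration, both of which are routine given the geometric decay $e^{-2\pi a k}$ and the integrability $|G|^2 \in L^1(\R)$.
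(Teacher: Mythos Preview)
Your argument is correct and follows essentially the same route as the paper's main proof: both reduce the isometry to the identity
\[
\frac{\sinh(2\pi a)}{\cosh(2\pi a)-\cos(2\pi x)} = 1 + 2\sum_{k\ge 1} e^{-2\pi a k}\cos(2\pi k x),
\]
and then observe that integrating the nonconstant terms against $(x^2+a^2)|F(x)|^2$ gives zero because the Fourier transform of this type-$2\pi$, $L^1$ function vanishes at $2\pi k$ for $k\neq 0$. Your convolution justification (writing $|G|^2=G\,\overline G$ with $\hat G$ supported in $[-\pi,\pi]$, so the convolution is continuous and vanishes at the endpoints $\pm 2\pi$) is in fact a bit more explicit than the paper's appeal to dominated convergence. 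The one genuine methodological difference is in establishing the Hermite--Biehler inequality: the paper notes that $E_a$ inherits Polya-class membership from $z^{-1}\sin\pi z$ and then invokes the maximum principle, whereas you prove the strict monotonicity of $s\mapsto(\cosh 2\pi s-\cos 2\pi x)/(x^2+s^2)$ by hand. The Polya-class route is shorter and conceptually cleaner; your computation is self-contained and avoids importing that machinery.
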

 
 \begin{proof}  Since $z\mapsto \sin\pi z$ is LP and hence of P\'{o}lya class, we have that $z \mapsto \sin(\pi(z + i a))$ is also of P\'{o}lya class. By \cite[Section 7, Lemma 1]{B} it follows that $E_a$ is of P\'{o}lya class. This implies
\[
|E_a(z)|\ge |E_a^*(z)|
\]
for all $z$ with $\Im z>0$. Since $E_a$ has no zeros in the upper half plane, the function $E_a^*/E_a$ is analytic in the upper half plane and has modulus bounded by $1$. Since this quotient is not constant, the modulus is never equal to $1$ by the maximum principle, hence $E_a$ satisfies \eqref{HB}. 

It can be checked directly that $E_a$ has bounded type $\pi$ (or apply the reverse direction of Krein's theorem). Evidently $E_a$ has no real zeros and $E_a^*(z) = E_a(-z)$ for all $z$. A direct calculation gives
\begin{align*}
A_a(z) &= \sqrt{\frac{2}{\sinh(2\pi a)}} \frac{z\cosh(\pi a)\sin(\pi z) +a \sinh (\pi a)\cos(\pi z)}{z^2+a^2},\\
B_a(z) &= \sqrt{\frac{2}{\sinh(2\pi a)}} \frac{a\cosh(\pi a)\sin(\pi z) -z \sinh (\pi a)\cos(\pi z)}{z^2+a^2},
\end{align*}
and in particular $B_a\notin \mc{H}(E_a)$.  Hence $E_a$ satisfies (I) - (IV).  

Taking limits in \eqref{K-from-AB} leads to the representation
\begin{align}\label{RK-real}
K_a(x,x) = \frac{\pi(a^2+x^2) -a\coth(2\pi a) + a\cos(2\pi x) \csch(2\pi a)}{\pi (a^2+x^2)^2}
\end{align}
for all real $x$. Recall that $\mu_{E_a}(A) = \int_A |E_a(x)|^{-2} dx$. It is straightforward to check that $L^2(\R,\mu_a)$ and $L^2(\R, \mu_{E_a})$ are equal as sets with equivalent norms. It follows that $\mc{A}_2(\pi,\mu_a)$ and $H(E_a)$ are equal as sets and have equivalent norms. The main statement to prove is the fact that the two norms are equal on the smaller spaces.

We note first that
\begin{align}\label{Sinc-eq1}
\frac{(z+i a)(z-ia)}{\sin(\pi(z+ia))\sin(\pi(z-ia))} = \frac{2(z^2+a^2)}{\cosh(2\pi a) - \cos(2\pi z)}
\end{align}
holds, in particular, the right hand side is $1$-periodic after division by $z^2+a^2$. Furthermore,
\[
\int_0^1 \frac{1}{\cosh(2\pi a) - \cos(2\pi x)} dx = \frac{1 }{\sinh(2\pi a)}.
\]

This means that $p_a$ defined by  $p_a(x) =  \sinh(2\pi a) (\cosh(2\pi a) - \cos(2\pi x))^{-1} -1$ is $1$-periodic and has mean value zero. Since $a>0$, this function is infinitely  differentiable on the real line. It follows that the Fourier series of $p_a$ converges absolutely and uniformly, and that it represents the function, i.e., there exists a sequence $a_n$ so that
\begin{align}\label{rep-pa}
p_a(x) = \sum_{n\neq 0} a_n e^{2\pi i n x}
\end{align}
for all real $x$. 

  Let $H\in L^1(\R)$ be an entire function of exponential type $2\pi$. Since $H\in L^2(\R)$  by \cite[Theorem 6.7.1]{Bo}, the Paley-Wiener theorem \cite[Theorem  4.1]{SW} implies that the Fourier transform $\widehat{H}$ defined by
  \[
  \widehat{H}(t) = \int_{-\infty}^\infty e^{-2\pi i x t} H(x) dx
  \]
satisfies $\widehat{H}(t) = 0$ for $|t|>1$. Since $H\in L^1(\R)$ it follows that $\widehat{H}$ is continuous, hence $\widehat{H}(t) = 0$ for $|t|\ge 1$. This implies
\[
\int_{-\infty}^\infty H(x) \sum_{\substack{|n|\le N \\ n\neq 0}} a_n e^{2\pi i n x}   dx = \sum_{\substack{ |n|\le N \\ n\neq 0} } a_n \widehat{H}(-n) = 0. 
\]

Since the partial sums of the series in  \eqref{rep-pa} converge uniformly,  we obtain with an application of Lebesgue dominated convergence that
\begin{align}\label{zero-mean}
\int_{-\infty}^\infty H(x) \left( \frac{\sinh(2\pi a)}{\cosh(2\pi a) - \cos(2\pi x)} -1\right) dx = 0.
 \end{align}
  
Let $F,G\in \mc{A}_2(\pi,\mu_a)$ and define $H$ by $H(z) = F(z)G^*(z) (z^2+a^2)$. It follows from \eqref{Sinc-eq1}  that
\begin{align*}
\langle F, G \rangle_{\mc{H}(E_a)} - \langle F, G \rangle_{L^2(\R, \mu_a)} &= \int_{-\infty}^\infty F(x) G^*(x) \{|E_a(x)|^{-2}-(x^2+a^2)\} dx\\
&= \int_{-\infty}^\infty H(x) \left( \frac{\sinh(2\pi a)}{\cosh(2\pi a) - \cos(2\pi x)} -1\right) dx,
\end{align*}
and since $H$ is a Lebesgue integrable entire function of exponential type $2\pi$, it  follows from \eqref{zero-mean} that
\[
\langle F, G \rangle_{\mc{H}(E_a)} = \langle F, G \rangle_{L^2(\R, \mu_a)} 
\]
as claimed.
\end{proof}

\begin{proof}[Sketch of an alternate proof.]  Define for $a>0$ the meromorphic function $W_a$ by
\[
W_a(z) = -e^{-2\pi  a  } \frac{a+iz}{a-iz}
\]
and note that $W_a$ is analytic and has modulus $\le 1$ in the upper half plane. The identity
\[
\frac{E_a(z)+ E_a^*(z) W_a(z)}{E_a(z) - E_a^*(z) W_a(z)} = \coth(2\pi a) - e^{2\pi i z} \csch(2\pi a) 
\]
is valid for $z\in\C$, and for real $x$ and $y>0$
\[
\frac{y}{\pi} \int_{-\infty}^\infty \frac{(t^2+a^2) |E_a(t)|^2 dt}{(x-t)^2+y^2} = \coth(2\pi a) -  e^{-2\pi y}\cos(2\pi x) \csch(2\pi a)
\]
holds. Theorem V.A of \cite{B2} with $d\mu(t) = (t^2+a^2) |E_a(t)|^2 dt$ implies 
\[
\int_{-\infty}^\infty |f(x)|^2 (x^2+a^2) dx = \int_{-\infty}^\infty \left|\frac{f(x)}{E_a(x)}\right|^2 dx
\]
for every $f\in\mc{H}(E_a)$.
\end{proof}

\begin{proof}[Proof of Theorem \ref{vanishing}]  
We define $S_a^+$ and $S_a^-$ by
\begin{align*}
S_a^+(z) &= M^+(B_a^2, a, z)(z^2+a^2),\\
S_a^-(z) &= M^-(B_a^2, a, z)(z^2+a^2).
\end{align*}

Since $B_a$ is odd, we have $B_a(0) =0$. Since $E_a$ is Hermite-Biehler and is of bounded type in the upper half plane, it follows that $B_a$ is of bounded type in the upper half plane. Since $B_a$ has only real zeros, \cite[Problem 34]{B} implies that $B_a$ is of P\'{o}lya class, and \cite[Theorem 7]{B} implies that $B_a\in LP$. Hence $F = B_a^2$ satisfies the assumptions of Proposition \ref{continuationProp}. Furthermore, it can be checked directly that $B_a^2(ia) <0$.  For the remainder of the proof we set $M^\pm(z) = M^\pm(B_a^2,a,z)$. Since $B_a^2\in \mc{A}(2\pi)$ we obtain that $M^+,M^-\in \mc{A}(2\pi)$. Theorems \ref{M^+thm} and \ref{M^-thm} imply
\[
M^-(x)\le t_a(x) \le M^+(x)
\]
for all real $x$. The proof of Theorem \ref{T1} shows that $M^+ = T_a^+$ and $M^- = T_a^-$ with respect to the measure $d\mu_{E_a}$. It follows from \eqref{opt-value-intro} that
\begin{align}\label{opt-eq-1}
\int_{-\infty}^\infty (M^+(x) - M^-(x)) \frac{dx}{|E_a(x)|^2} = \frac{1}{ a^2 K_a(0,0)}=\frac{\pi a}{\pi a - \tanh(\pi a)}.
\end{align}

By definition of $S_a^\pm$ we have
\begin{align}\label{opt-eq-2}
\int_{-\infty}^\infty (S_a^+(z) &- S_a^-(x))dx= \int_{-\infty}^\infty (M^+(x)- M^-(x))  (x^2+a^2) dx.
\end{align}

Since $M^+-M^-\in \mc{A}_1(2\pi, \mu_a)$, it follows that $M^+ - M^- \in \mc{A}_1(2\pi, \mu_{E_a})$. Lemma \ref{E-properties} (2) implies that $M^+ - M^- = UU^*$ with $U\in \mc{H}(E_a)$. Theorem \ref{isometry}, \eqref{opt-eq-1}, and \eqref{opt-eq-2} imply 
\[
\int_{-\infty}^\infty (S_a^+(x) - S_a^-(x))dx = \frac{\pi a}{\pi a - \tanh(\pi a)},
\]
which gives the case of equality in \eqref{optimal-vanishing-intro}.

Let now $S,T\in\mc{A}(2\pi)$ such that $S(ia) = T(ia) =0$ and $S(x) \le t_a(x)  \le T(x)$ on the real line. We may assume that $S- M^-$ and $T - M^+$ are integrable with respect to $(x^2+a^2) dx$. Since $S$ and $T$ are real entire, it follows that $S(-ia) = T(-ia) =0$, hence 
\[
S(z) = (z^2+a^2) \sigma(z)
\]
and
\[
T(z) = (z^2+a^2) \tau(z)
\]
where $\sigma,\tau$ are entire and have exponential type $2\pi$. Furthermore, $\sigma - t_a$ and $\tau-t_a$ are integrable and
\[
\sigma(x) \le t_a(x) \le \tau(x)
\]
for all real $x$. It follows from Theorem \ref{T1} that
\[
\int_{-\infty}^\infty (\sigma(x) -\tau(x)) (x^2+a^2) dx \ge \frac{1}{a^2 K_a(0,0)},
\]
which is \eqref{optimal-vanishing-intro}.
\end{proof}

\end{document}